\newtheorem{theo}{Theorem}
\newtheorem{coro}{Corollary}
\newtheorem{lemma}{Lemma}[section]
\newtheorem{definition}[lemma]{Definition}
\newtheorem{claim}[lemma]{Claim}
\newtheorem{obs}[lemma]{Observation}
\newcommand{\x}{\mathbf{x}}
\renewcommand{\Pr}{\mathop{\bf Pr\/}}
\newcommand{\Z}{\mathbb{Z}}
\renewcommand{\P}{\mathcal{P}}
\newcommand{\F}{\mathcal{F}}
\newcommand{\size}[1]{\left\lvert #1 \right\rvert}
\title{Ramsey Theory, Integer Partitions and a New Proof of the Erd\H{o}s-Szekeres Theorem\footnotetext{2010 {\em Mathematics Subject Classification:} 05D10, 05A17, 05C65}}
\author{
Guy Moshkovitz\thanks{School of Mathematics, Tel-Aviv University, Tel-Aviv, Israel 69978.  Email: {\tt guymosko@tau.ac.il}. Supported in part by ISF grant 224/11.}
\and Asaf Shapira\thanks{School of Mathematics, Tel-Aviv University, Tel-Aviv, Israel 69978, and Schools of Mathematics and Computer Science, Georgia Institute of Technology, Atlanta, GA 30332. Email: {\tt asafico@tau.ac.il}. Supported in part by NSF Grant DMS-0901355, ISF Grant 224/11 and a Marie-Curie CIG Grant 303320.}
}
\begin{document}

\date{}
\maketitle

\begin{abstract}

%
Let $H$ be a $k$-uniform hypergraph whose vertices are the integers $1,\ldots,N$. We say that $H$ contains a monotone path of length $n$ if there are $x_1 < x_2 < \cdots < x_{n+k-1}$ so that $H$ contains all $n$ edges of the form $\{x_i,x_{i+1},\ldots,x_{i+k-1}\}$.
Let $N_k(q,n)$ be the smallest integer $N$ so that every $q$-coloring of the edges of the complete $k$-uniform hypergraph on $N$ vertices contains a monochromatic monotone path of
length $n$.
While the study of $N_k(q,n)$ for specific values of $k$ and $q$ goes back (implicitly) to the seminal 1935 paper of Erd\H{o}s and Szekeres, the problem of bounding $N_k(q,n)$ for arbitrary $k$ and $q$ was studied by Fox, Pach, Sudakov and Suk.

Our main contribution here is a novel approach for bounding the Ramsey-type numbers $N_k(q,n)$, based on establishing a surprisingly tight connection between them and the enumerative problem of counting high-dimensional integer partitions. Some of the concrete results we obtain using this approach are the following:

\begin{itemize}
\item We show that for every fixed $q$ we have $N_3(q,n)=2^{\Theta(n^{q-1})}$, thus resolving an open problem raised by Fox et al. 

\item We show that for every $k \geq 3$,  $N_k(2,n)=2^{\cdot^{\cdot^{2^{(2-o(1))n}}}}$
where the height of the tower is $k-2$, thus resolving an open problem raised by Eli\'{a}\v{s} and Matou\v{s}ek. 

\item We give a new pigeonhole proof of the Erd\H{o}s-Szekeres Theorem on cups-vs-caps, similar to Seidenberg's proof of the Erd\H{o}s-Szekeres Lemma on increasing/decreasing subsequences.
\end{itemize}

\end{abstract}


\section{Introduction} \label{sec:intro}

\subsection{Some historical background}

It would not be an exaggeration to state that modern Extremal Combinatorics, and Ramsey Theory in particular, stemmed from the seminal 1935 paper of Erd\H{o}s and Szekeres~\cite{ErdosSz35}. Besides establishing explicit bounds for graph and hypergraph Ramsey numbers, they also proved two of the most well-known results in Combinatorics, which have become known as the
{\em Erd\H{o}s-Szekeres Lemma} (ESL) and the {\em Erd\H{o}s-Szekeres Theorem} (EST).
Let $f(a,b)$ be the smallest integer so that every sequence of $f(a,b)$ distinct real numbers contains either an increasing sequence of length $a$ or a decreasing sequence of length $b$.
Then ESL states that
$$
f(n,n) \leq (n-1)^2+1\;.
$$
Let $g(a,b)$ be the smallest integer so that every set of $g(a,b)$ points in the plane in general position, all with distinct $x$-coordinates, contains either $a$ points $p_1,\ldots,p_{a}$
with increasing $x$-coordinate so that the slopes of the segments $(p_1,p_2),(p_2,p_3)\ldots,(p_{a-1},p_{a})$ are
increasing, or $b$ such points so that the slopes of these segments are decreasing.
Then EST states that
\begin{equation}\label{eq:EST}
g(n,n) \leq \binom{2n-4}{n-2}+1\;.
\end{equation}
We note that EST implies that for any integer $n$ there is an integer $N(n)$ so that every set of $N(n)$ points in general position in the plane contains $n$ points in convex position.
Specifically, it shows that $N(n) \leq \binom{2n-4}{n-2}+1$. The fact that $N(n)$ is finite was later labelled the ``Happy Ending Theorem''.

The original proof in~\cite{ErdosSz35} of ESL was based on establishing the recurrence relation
$f(n+1,n+1) \leq f(n,n)+2n-1$.
By now, there are several proofs of ESL.
In fact, Steele~\cite{Steele95} has collected $7$ of these proofs, and dubbed the following pigeonhole-type proof by Seidenberg~\cite{Seidenberg59} as ``the slickest and most systematic''.
Assign to each real number $x$ in the sequence two labels $x^+,x^-$ where we take $x^+$ to be the length of the longest increasing sequence ending at $x$, and $x^-$ to be the length of the longest decreasing sequence ending at $x$. Now, it is easy to see that for every pair of reals $x,y$ in the sequence we have $(x^+,x^-) \neq (y^+,y^-)$.
Hence, if there is neither an increasing nor a decreasing sequence of length $n$ then there can be no more than $(n-1)^2$ numbers in the sequence. The same idea shows that $f(a,b) \leq (a-1)(b-1)+1$, and it is easy to see that this bound is tight.

The original proof in~\cite{ErdosSz35} of EST was based on establishing the recurrence relation $g(a+1,b+1) \leq g(a,b+1)+g(a+1,b)-1$. To the best of our knowledge, this is the only known proof of this classic result.\footnote{We stress that here we are referring to bounding $g(a,b)$. The bound on $N(n)$ has been slightly improved (see~\cite{MorrisSo00} for a survey). Interestingly, all improvements rely
on clever applications of $g(a,b)$.} As part of our investigation here we will establish a new pigeonhole-type proof of EST, similar in spirit to Seidenberg's proof~\cite{Seidenberg59} of ESL we sketched in the previous paragraph.
We have to admit that we did not set out to try and find a new proof of EST. Our goal was actually to bound Ramsey numbers of certain generalizations of EST, and the new proof is just a byproduct.

\subsection{High-dimensional integer partitions}

The notion of integer partitions is without doubt the most well-studied notion in discrete mathematics, and goes back (at least) to Euler.
We will be very brief here and just define the notions that are relevant to the results of this paper (see~\cite{Andrews} for more background on this subject).
A decreasing sequence of nonnegative integers $a_1 \geq a_2 \geq \ldots$ will be called a {\em line partition}. One can visualize a line partition
as a 2-dimensional sequence of stacks of height $a_i$ each (essentially, a Young diagram, see Figure~\ref{fig:1partition}).
A matrix $A$ of nonnegative integers so that $A_{i,j} \geq A_{i+1,j}$ and $A_{i,j} \geq A_{i,j+1}$ for all possible $i,j$ will be called a {\em plane partition}. One can visualize a plane partition in 3-dimensions as a plane consisting of stacks, where at location $(i,j)$ we have a stack of height $A_{i,j}$ (see Figure~\ref{fig:2partition}). The notion of a plane partition was introduced by MacMahon in 1897~\cite{MacMahon97} as a $2$-dimensional analogue of integer partitions\footnote{Recall that if $n=a_1+\ldots+a_k$ then the standard way to write this partition is as a decreasing sequence $a_1 \geq a_2 \geq \ldots$, that is, what we call here a line partition.}
and has been extensively studied ever since.
More generally, one defines a $d$-dimensional partition as a $d$-dimensional (hyper)matrix $A$ of nonnegative integers so that the matrix is decreasing in each line, that is,
$A_{i_1,\ldots,i_t,\ldots,i_{d}} \geq A_{i_1,\ldots,i_t+1,\ldots,i_{d}}$
for every possible $i_1,\ldots,i_{d}$ and $1 \leq t \leq d$.

\begin{figure}
\centering
\subfloat[A line partition and the lattice path along its boundary.]
{\label{fig:1partition}\includegraphics[scale=0.5]{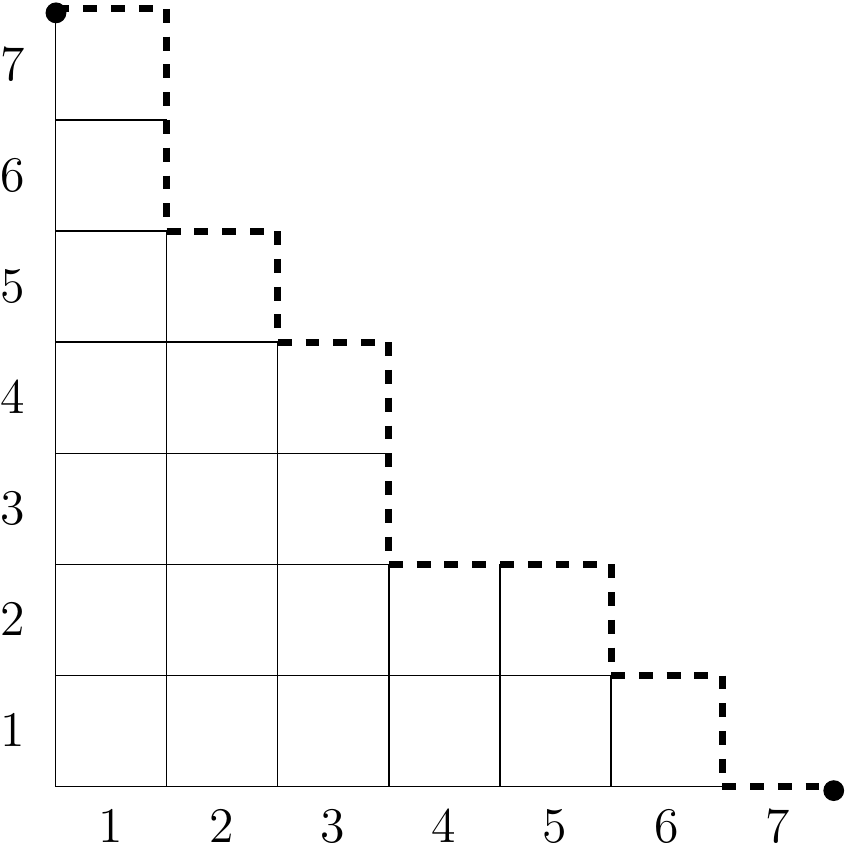}}
\qquad\qquad\qquad
\subfloat[A plane partition. Note that the stacks are ``flushed into the corner''.]
{\label{fig:2partition}\includegraphics[scale=0.5]{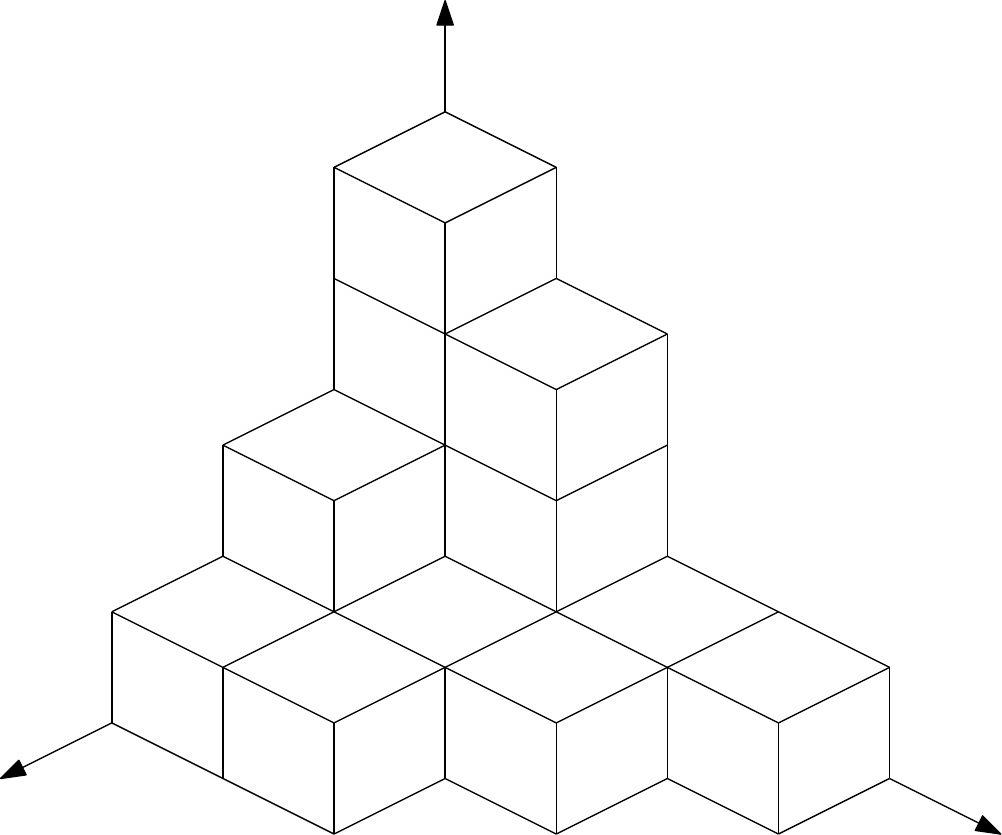}}
\caption{Partitions}
\end{figure}

Let $P_d(n)$ be the number of $n\times\cdots\times n$ $d$-dimensional partitions with entries from $\{0,1,\ldots,n\}$.
Note that when $d=1$ (that is, line partitions $n \geq a_1 \geq \ldots \ge a_n \geq 0$) we can think of such an integer partition as a lattice path in $\Z^2$ starting at $(0,n)$ and ending at $(n,0)$ where in each step the path moves either down or to the right (see Figure~\ref{fig:1partition}).
It is thus clear that
\begin{equation}\label{eq:dim1partition}
P_1(n)=\binom{2n}{n} \;.
\end{equation}
Computing $P_2(n)$ appears to be much harder. Luckily, a celebrated result of MacMahon~\cite{MacMahon16} states that

\begin{equation}\label{eq:dim2partition}
P_2(n)=\prod_{1 \leq i,j,k \leq n}\frac{i+j+k-1}{i+j+k-2}\;.
\end{equation}
We refer the reader to~\cite{Stanley89} for more background and references on the rich history and current research on plane partitions. We also recommend Chapter 5 of~\cite{Aigner} for an ingenious proof of~(\ref{eq:dim2partition}) using the Lindstr\"{o}m-Gessel-Viennot Lemma.

Unfortunately, there is no known closed formula for $P_d(n)$ even for $d=3$ (see, e.g.,~\cite{Bailey99}).
The same is true even for the more well-studied variant of the problem, which considers the number of partitions with a given sum of entries; in fact, even establishing a generating function for three-dimensional partitions, usually referred to as {\em solid partitions}, is an outstanding open problem in enumerative combinatorics which goes back to MacMahon. See~\cite{MustonenRa03} and~\cite{Stanley89} for some background on this open problem.
However, observe that each line in a $d$-dimensional partition is a line partition. Since a $d$-dimensional partition is composed of $n^{d-1}$ line partitions, we can derive from (\ref{eq:dim1partition}) the crude bound

\begin{equation}\label{eq:dimqpartition}
P_d(n) \leq \binom{2n}{n}^{n^{d-1}} \leq 2^{2n^{d}}\;.
\end{equation}

\subsection{Erd\H{o}s-Szekeres generalized}\label{subsec:results1}

One cannot but suspect that there is some abstract combinatorial phenomenon behind ESL and EST. Indeed, ESL is a special case of Dilworth's Theorem~\cite{Dilworth50} (or actually, its dual, which is due to Mirsky~\cite{Mirsky71}). As to EST, Chv\'{a}tal and Koml\'{o}s~\cite{Chvatal71} obtained a combinatorial lemma generalizing it in terms of paths in edge weighted tournaments. Very recently, Fox, Pach, Sudakov and Suk~\cite{FoxPaSuSu11} suggested the following elegant framework for studying such problems, which nicely puts both ESL and EST under a single roof.

Let $K^k_N$ denote the complete $k$-uniform hypergraph on a set of $N$ vertices, that is, the collection of subsets of size $k$ of the $N$ vertices. For our purposes here it will be useful to think of the vertices as being ordered and thus name them $1,\ldots,N$. For a sequence of vertices $x_1 < x_2 < \cdots < x_{n+k-1}$ we say that the edges
$$
\{x_1,\ldots,x_k\}, \{x_2,\ldots,x_{k+1}\},\ldots,\{x_{n},\ldots,x_{n+k-1}\}
$$
form a {\em monotone path}, and we refer to the number of edges as its {\em length} (so the path above is of length $n$).\footnote{We note that Fox et al.~\cite{FoxPaSuSu11} measured the length of a path using the number of vertices. As it turns out, in our proofs it will be much more natural to use the number of edges as the measure of length.}
Note that henceforth, whenever we will be talking about an edge $\{x,y,z\}$ we will implicitly assume that $x < y <z$.

Let $N_k(q,n)$ be the smallest integer $N$ so that every coloring of the edges of $K^k_N$ using $q$ colors contains a monochromatic monotone path of length $n$.
Recalling ESL, it is easy to see that
$f(n+1,n+1) \leq N_2(2,n)$ (notice $N_2(q,n)$ measures length with respect to edges),
and that the proof of ESL we sketched earlier implies $N_2(q,n) \leq n^q+1$. It is also easy to see that
\begin{equation}\label{eq:PathEST}
g(n+2,n+2) \leq N_3(2,n)\;,
\end{equation}
and that one can prove $N_3(2,n) \leq \binom{2n}{n}+1$ by applying the recursive argument (mentioned above) that was used by Erd\H{o}s and Szekeres~\cite{ErdosSz35} to prove EST~(\ref{eq:EST}). Hence, one can prove both ESL and EST in the framework of studying $N_k(q,n)$.
%

The question of bounding $N_3(q,n)$ for $q \geq 3$ was raised by Fox et al.~\cite{FoxPaSuSu11}, motivated (partially) by certain geometric generalizations of EST (see~\cite{FoxPaSuSu11} for the exact details). One of their main results was that
\begin{equation}\label{FPSS}
2^{(n/q)^{q-1}}\leq N_3(q,n) \leq n^{n^{q-1}} \;.
\end{equation}
The main problem they left open was whether the correct exponent $\log_2 N_3(q,n)$ is of order $n^{q-1}$.

\subsection{Our results}

Our main result in this paper establishes a surprisingly close connection between the problem of bounding the Ramsey numbers $N_3(q,n)$ defined above, and the problem of enumerating high-dimensional integer partitions we discussed in the previous subsection.

\begin{theo}\label{theo:main} For every $q \geq 2$ and $n \geq 2$ we have
$$
N_3(q,n)=P_{q-1}(n)+1 \;.
$$
\end{theo}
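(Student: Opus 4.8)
The plan is to set up a Seidenberg-style labeling that turns a path-free coloring into an injection from the vertex set into the set of $(q-1)$-dimensional partitions, and then to run the construction in reverse to obtain a matching extremal coloring.

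First I would recast monotone paths in terms of a transition structure on pairs: associate with an ordered pair $p=(y,z)$, $y<z$, and a color $c$ the quantity $f_c(p)$, the maximum number of edges of a monochromatic monotone path in color $c$ whose last two vertices are $y,z$ (equivalently, a directed path ending at $p$ in the DAG whose edges $(x,y)\to(y,z)$ carry the color $\chi(\{x,y,z\})$). These satisfy the recursion $f_c((y,z)) = \max\big(\{0\}\cup\{\,f_c((x,y))+1 : x<y,\ \chi(\{x,y,z\})=c\,\}\big)$, and if the coloring has no monochromatic monotone path of length $n$ then $f_c(p)\in\{0,\ldots,n-1\}$ for every $p$ and $c$.

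Next I would attach to each vertex $v$ a $(q-1)$-dimensional array $A^v$ indexed by budget vectors $\mathbf t=(t_1,\ldots,t_{q-1})\in\{0,\ldots,n-1\}^{q-1}$, by setting
\[
A^v[\mathbf t] = 1+\max\Big(\{-1\}\cup\big\{\,f_q((y,v)) : y<v,\ f_c((y,v))\le t_c\ \text{for all } c<q\,\big\}\Big).
\]
Two features are immediate: $A^v[\mathbf t]$ is monotone in each coordinate of $\mathbf t$ (loosening a budget can only help), so after reflecting the indices $A^v$ is a genuine $(q-1)$-dimensional partition; and its entries lie in $\{0,\ldots,n\}$ while each index runs over $n$ values, so $A^v$ is one of exactly $P_{q-1}(n)$ possible partitions. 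Thus the upper bound $N_3(q,n)\le P_{q-1}(n)+1$ reduces to showing that $v\mapsto A^v$ is injective.

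Injectivity is where the real work lies, and I expect it to be the main obstacle. Given $u<v$, the point is that the single extra edge supplied by triples of the form $\{x,u,v\}$ should be enough to separate the two partitions: the pair $(u,v)$ contributes the value $f_q((u,v))$ to $A^v$ at the index $\mathbf t^\ast=(f_1((u,v)),\ldots,f_{q-1}((u,v)))$, and one must show that $A^u$ and $A^v$ cannot agree at $\mathbf t^\ast$ (or at a related, carefully chosen index). The difficulty is that this is a genuinely $q$-color statement—one has to track how the color of each triple $\{x,u,v\}$ simultaneously updates all $q$ coordinates when passing from the pairs $(x,u)$ to $(u,v)$—so I anticipate an exchange/extremal argument, possibly organized as an induction on $q$ that mirrors the decomposition of a $(q-1)$-dimensional partition into a monotone stack of $(q-2)$-dimensional slices. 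Finally, for the lower bound I would reverse the encoding. Taking the vertex set to be the $P_{q-1}(n)$ partitions themselves, linearly ordered by a fixed linear extension of the coordinatewise order, I would color each triple by the unique coordinate/color recording the progress between the partitions, arranging things so that the induced labels $A^v$ recover the identity map. Then any monochromatic monotone path in color $c$ would correspond to a strictly ascending chain of partitions along which a color-$c$ potential bounded by $n$ increases at every step, forcing its length below $n$; this yields a path-free coloring on $P_{q-1}(n)$ vertices and hence $N_3(q,n)\ge P_{q-1}(n)+1$. Verifying that this coloring is well defined and that the potential is genuinely bounded is the analogue, on the construction side, of the injectivity argument above.
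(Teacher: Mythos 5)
Your overall strategy --- a Seidenberg-type injection of vertices into $(q-1)$-dimensional partitions for the upper bound, and a reverse construction for the lower bound --- is the same as the paper's, but the specific encoding you propose for the upper bound is \emph{not injective}, so the step you yourself flag as ``the main obstacle'' cannot be completed as stated. The problem is the direction of the budget inequality. When you assume $A^u=A^v$ and examine the index $\mathbf{t}^\ast=(f_1((u,v)),\ldots,f_{q-1}((u,v)))$, you obtain a $y<u$ with $f_c((y,u))\le f_c((u,v))$ for all $c<q$ and $f_q((y,u))\ge f_q((u,v))$; extending the longest path of the color $j$ of the edge $\{y,u,v\}$ from $(y,u)$ to $(u,v)$ gives $f_j((u,v))>f_j((y,u))$, which contradicts something only when $j=q$ --- for $j<q$ it is perfectly consistent with the budget inequality $f_j((y,u))\le f_j((u,v))$. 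This is not merely a gap in the argument: injectivity genuinely fails. In the extremal $2$-coloring of $K^3_6$ with no monochromatic monotone path of length $2$ (take the paper's lower-bound construction), the second vertex sees only the pair-label $(f_1,f_2)=(0,0)$ while the third sees the labels $(0,0)$ and $(1,0)$; your max over pairs with $f_1\le t$ returns the identically-$1$ array for both, so two distinct vertices collide. The repair is to reverse the inequality: set $A^v[\mathbf{t}]=\max\bigl(\{0\}\cup\{1+f_q((y,v)) : f_c((y,v))\ge t_c \text{ for all } c<q\}\bigr)$. This is exactly the profile of the down-set of $[n]^q$ generated by the points $(1+f_1((y,v)),\ldots,1+f_q((y,v)))$, it loses no information, and injectivity then follows from a two-line pigeonhole with no induction on $q$ and no exchange argument: whatever the color of $\{y,u,v\}$, the coordinatewise domination $f_c((y,u))\ge f_c((u,v))$ is contradicted. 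This is precisely the paper's proof.

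The lower bound in your write-up is a plan rather than a proof: ``color each triple by the unique coordinate/color recording the progress between the partitions, arranging things so that the induced labels recover the identity map'' does not specify a coloring, and the existence of a suitable ``color-$c$ potential bounded by $n$'' is the entire content of the claim. The paper's construction orders the $(q-1)$-dimensional partitions lexicographically and, for $A\lessdot B\lessdot C$, compares the positions $\delta(A,B)$ and $\delta(B,C)$ of the first disagreements: the edge gets color $i\le q-1$ if the $i$-th coordinate of $\delta$ strictly increases, and color $q$ otherwise, in which case $\delta(B,C)\preceq\delta(A,B)$ forces the entry of $C$ at position $\delta(B,C)$ to strictly exceed the entry of $B$ at position $\delta(A,B)$; an induction on the path length then shows that one of these $q$ quantities, each bounded by $n$, strictly increases along any monochromatic monotone path. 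You would need to supply this (or an equivalent) construction and verification to close the argument.
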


Recall that by~(\ref{eq:dimqpartition}) we have $P_{q-1}(n) \leq 2^{2n^{q-1}}$. Hence, Theorem~\ref{theo:main} resolves the problem of Fox et al.~\cite{FoxPaSuSu11} mentioned above by establishing that for every fixed $q$ we have $N_3(q,n)=2^{\Theta(n^{q-1})}$.
Also, we get via MacMahon's formula~(\ref{eq:dim2partition}) an exact bound for $N_3(3,n)$, and from the long history of unsuccessful attempts to precisely compute $P_3(n)$ we learn that it is probably hopeless to compute $N_3(4,n)$ exactly.
In fact, as we shall see later on in the paper, Theorem~\ref{theo:main} also implies that it is unlikely to expect a closed formula even for $N_3(q,2)$, namely the case of the two-edge path! (see Section~\ref{sec:conclude})

Notice that Theorem~\ref{theo:main}, together with the observations we have made in (\ref{eq:dim1partition}) and (\ref{eq:PathEST}), implies EST~(\ref{eq:EST}).
We note that our initial approach for resolving the problem raised in~\cite{FoxPaSuSu11} was to adapt the recursive approach of Erd\H{o}s and Szekeres for proving EST (\ref{eq:EST}) (as sketched in the previous section) to the general setting of $q \geq 3$. It appears that this approach cannot be generalized, mainly because attempts to come up with a recursion for $P_{q-1}(n)$ have failed.
So in some sense, our new proof of EST came out of the need to find a proof that can be generalized to more than $2$ colors.

To prove the upper bound in Theorem~\ref{theo:main} we will use a pigeonhole type argument, similar to Seidenberg's proof~\cite{Seidenberg59} of ESL which we sketched above. To this end we will map every
vertex of the hypergraph to a $(q-1)$-dimensional partition and argue that this mapping must be injective. To prove the lower bound, we will show a surprising way by which one can think
of $(q-1)$-dimensional partitions as vertices of a hypergraph, and then use certain relations between these partitions/vertices in order to define an {\em explicit} $q$-coloring of the complete $3$-uniform hypergraph without long monochromatic monotone paths.

Note that the bounds on $N_3(q,n)$ we get from Theorem~\ref{theo:main} by combining (\ref{eq:dimqpartition}) and the lower bound in (\ref{FPSS}) are $2^{(n/q)^{q-1}} \leq N_3(q,n) \leq 2^{2n^{q-1}}$.
It is thus natural to ask if one can use the precise description of $N_3(q,n)$ of Theorem~\ref{theo:main} in order to tighten the dependence on $q$ in the exponent of $N_3(q,n)$.
To this end we first prove the following.

\begin{theo}\label{theo:lower} For every $d \geq 1$ and $n \geq 1$ we have
$$ 
P_d(n) \geq 2^{\frac23n^{d}/\sqrt{d+1}}\;.
$$ 
\end{theo}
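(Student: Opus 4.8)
The plan is to reduce the enumeration of $d$-dimensional partitions to counting order ideals (down-sets) of a grid poset, and then to lower bound the number of such ideals by exhibiting a single large antichain. First I would record the reformulation
$$P_d(n)=\#\{\text{order ideals of the poset } Q:=[n]^{d+1}\}\,,$$
where $[n]^{d+1}=\{1,\dots,n\}^{d+1}$ carries the coordinate-wise partial order. Indeed, a $d$-dimensional partition with entries in $\{0,\dots,n\}$ is exactly a coordinate-wise non-increasing map $A\colon[n]^d\to\{0,\dots,n\}$; passing to its super-level sets $S_j=\{x:A_x\ge j\}$ turns it into a nested chain $S_1\supseteq\cdots\supseteq S_n$ of down-sets of $[n]^d$, which is precisely a single order ideal of $[n]^{d+1}=[n]^d\times[n]$ (via $(x,j)\in I \Leftrightarrow x\in S_j$). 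As a sanity check, for $d=1$ this recovers $P_1(n)=\binom{2n}{n}$ from~(\ref{eq:dim1partition}), order ideals of $[n]^2$ being monotone lattice paths.

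Next I would invoke the elementary fact that a poset with an antichain of size $w$ has at least $2^{w}$ order ideals: for a fixed antichain $\mathcal A$, the down-closures $\downarrow\! B$ of the subsets $B\subseteq\mathcal A$ are pairwise distinct, since the maximal elements of $\downarrow\! B$ are exactly $B$. In a ranked poset every single rank level is an antichain (elements of equal rank are incomparable), so it suffices to take $\mathcal A$ to be a largest rank level of $Q$ under $\rho(x)=\sum_i(x_i-1)$. Writing $C=\max_r\,[t^r]\,(1+t+\cdots+t^{n-1})^{d+1}$ for its size, we obtain $P_d(n)\ge 2^{C}$, and it remains to prove $C\ge \tfrac23 n^{d}/\sqrt{d+1}$.

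The estimate on $C$ is the analytic heart of the argument. Interpreting the normalized generating polynomial probabilistically, $C=n^{d+1}\max_r\Pr[S=r]$, where $S=X_1+\cdots+X_{d+1}$ with the $X_i$ independent and uniform on $\{0,\dots,n-1\}$, so that $\sigma^2:=\Varx[S]=(d+1)(n^2-1)/12$. By Chebyshev's inequality $\Pr[\,|S-\mu|\le\lambda\sigma\,]\ge 1-\lambda^{-2}$, while the interval $[\mu-\lambda\sigma,\mu+\lambda\sigma]$ contains at most $2\lambda\sigma+1$ integers; hence $\max_r\Pr[S=r]\ge(1-\lambda^{-2})/(2\lambda\sigma+1)$. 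Optimizing the quantity $\lambda^{-1}-\lambda^{-3}$ that governs this bound gives the clean optimum $\lambda=\sqrt3$, for which the window length is $2\sqrt3\,\sigma=\sqrt{(d+1)(n^2-1)}<n\sqrt{d+1}$ and the numerator equals $1-\tfrac13=\tfrac23$. This is exactly what produces the constant $\tfrac23$ and yields $C\ge\tfrac23 n^{d}/\sqrt{d+1}$.

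The main obstacle is pinning down the constant exactly rather than merely up to lower-order factors: the choice $\lambda=\sqrt3$ is forced (any other value degrades the constant), and one must account honestly for the additive ``$+1$'' in the count of integer ranks inside the window as well as for the use of the exact variance $(n^2-1)/12$ in place of the cruder $n^2/12$. These contribute only $O(1/n)$ corrections, so they are absorbed for $n$ large, while the finitely many remaining small cases (for instance $d=1$, where $P_1(n)=\binom{2n}{n}$ already dwarfs the claimed bound) are checked directly. A secondary point demanding care is phrasing the order-ideal reformulation so that the ``entries in $\{0,\dots,n\}$'' normalization matches the $(d{+}1)$-fold grid exactly; getting this alignment right is what makes the extra dimension (and hence the exponent $n^{d}$ rather than $n^{d-1}$) appear.
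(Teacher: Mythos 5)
Your proposal is correct and follows essentially the same route as the paper: identify $P_d(n)$ with the number of down-sets of the grid poset $[n]^{d+1}$, observe that the subsets of a single rank level (an antichain) generate $2^{|{\rm level}|}$ distinct down-sets, and lower bound the largest level by Chebyshev plus pigeonhole with the same optimal choice $\lambda=\sqrt3$, which is exactly the content of Observations~\ref{obs:down-sets}--\ref{obs:antichains} and Lemma~\ref{lemma:midrankLower}. Your extra care about the ``$+1$'' in the integer count of the Chebyshev window is a point the paper itself elides, so no further action is needed.
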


Observe that from the above (and Theorem~\ref{theo:main}) we immediately get an exponential improvement over the lower bound of Fox et al.~\cite{FoxPaSuSu11} (stated in (\ref{FPSS})) in terms of the dependence of $N_3(q,n)$ on $q$. The following corollary thus summarizes our bounds for $N_3(q,n)$.

\begin{coro}\label{coro:bound}
For every $q \geq 2$ and $n \geq 2$ we have
$$
2^{\frac23 n^{q-1}/\sqrt{q}} \leq N_3(q,n) \leq 2^{2n^{q-1}} \;.
$$
\end{coro}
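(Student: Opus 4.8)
The plan is to read off both inequalities directly from the results already established, since Theorem~\ref{theo:main} reduces the Ramsey number entirely to a partition count: setting $d=q-1$ it gives $N_3(q,n)=P_{q-1}(n)+1$. Thus it suffices to sandwich $P_{q-1}(n)$ between the two target quantities, and everything follows by substituting $d=q-1$ into Theorem~\ref{theo:lower} and into the crude bound~(\ref{eq:dimqpartition}).

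For the lower bound I would simply drop the harmless $+1$ and invoke Theorem~\ref{theo:lower} with $d=q-1$. Since $\sqrt{d+1}=\sqrt{q}$, this yields
$$
N_3(q,n) = P_{q-1}(n) + 1 \geq P_{q-1}(n) \geq 2^{\frac23 n^{q-1}/\sqrt{q}} \;,
$$
which is exactly the claimed lower bound.

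For the upper bound I would apply~(\ref{eq:dimqpartition}) with $d=q-1$, namely $P_{q-1}(n) \leq \binom{2n}{n}^{n^{q-2}} \leq 2^{2n^{q-1}}$. The hard part --- really the only point that needs care --- is the off-by-one bookkeeping: the clean target is $2^{2n^{q-1}}$ with no additive correction, yet Theorem~\ref{theo:main} carries a $+1$. I would resolve this by exploiting that the middle binomial coefficient is just one of the $2n+1$ positive terms summing to $4^n$, so $\binom{2n}{n} < 2^{2n}$ strictly for every $n \geq 1$; raising to the power $n^{q-2} \geq 1$ preserves strictness, and hence $P_{q-1}(n) < 2^{2n^{q-1}}$. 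As both sides are integers, this forces $P_{q-1}(n) \leq 2^{2n^{q-1}}-1$, whence $N_3(q,n) = P_{q-1}(n)+1 \leq 2^{2n^{q-1}}$, completing the argument.
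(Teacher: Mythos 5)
Your proof is correct and follows essentially the same route as the paper, which obtains the corollary by combining Theorem~\ref{theo:main} with Theorem~\ref{theo:lower} for the lower bound and with the crude estimate~(\ref{eq:dimqpartition}) for the upper bound. Your extra care with the additive $+1$ (using the strict inequality $\binom{2n}{n}<2^{2n}$ and integrality to absorb it) is a detail the paper leaves implicit, but it is exactly the right way to justify the clean form of the upper bound.
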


As we discuss in the concluding remarks, it seems reasonable to conjecture that the lower bound gives the correct exponent, and
that $N_3(q,n)=2^{\Theta(n^{q-1}/\sqrt{q})}$.

\subsection{Higher uniformity}\label{subsection:results2}

Given the characterization of $N_3(q,n)$ in terms of enumerating integer partitions, it is natural to ask if a similar characterization can be proved for $N_k(q,n)$ for arbitrary $k \geq 3$. As we show in Section~\ref{sec:kgraphs} the answer is positive, but since the objects involved in this characterization are (slightly) complicated to define, we refrain from stating the results in this section and refer the reader to the statement of Theorem~\ref{theo:Nk2exact} in Section~\ref{sec:kgraphs}. Let us instead describe some immediate corollaries of this characterization.

Let $t_k(x)$ be a tower of exponents of height $k-1$ with $x$ at the top. So $t_1(x)=x$ and $t_3(x)=2^{2^{x}}$.
Since we know that $N_3(q,n) \leq 2^{O(n^{q-1})}$, it seems natural to suspect that $N_k(q,n) \leq t_{k-1}(O(n^{q-1}))$.
Indeed, Fox et al.~\cite{FoxPaSuSu11} show how to convert a bound of the form $N_3(q,n) \leq 2^{cn^{q-1}}$ (which we obtain in Theorem~\ref{theo:main}) into the more general bound $N_k(q,n) \leq t_{k-1}(c'n^{q-1})$ (where $c,c'$ are absolute constants). Here, we apply the main idea behind the proof of Theorem~\ref{theo:main} to give a very short and direct proof of the following bound, improving the one from~\cite{FoxPaSuSu11}, which relates $N_k(q,n)$ to $N_3(q,n)$.
\begin{theo}\label{theo:transform} For every $k \geq 3$, $q\geq 2$, and $n\geq 2$ we have
$$
N_k(q,n) \leq t_{k-2}(N_3(q,n)) \;.
$$
\end{theo}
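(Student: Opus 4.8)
The plan is to establish, for every $k\ge 3$, the single ``stepping up'' recurrence
\[
N_{k+1}(q,n)\le 2^{\,N_k(q,n)},
\]
and then to iterate it. Since $t_1(x)=x$, the claimed bound holds trivially for $k=3$; and if $N_k(q,n)\le t_{k-2}(N_3(q,n))$, the recurrence yields $N_{k+1}(q,n)\le 2^{t_{k-2}(N_3(q,n))}=t_{k-1}(N_3(q,n))$, which is precisely the assertion for $k+1$ (using $t_{j}(x)=2^{t_{j-1}(x)}$ for $j\ge 2$, valid here as $j=k-1\ge 2$). Thus the whole content of the theorem is the one-step inequality above, and the rest is a one-line induction on $k$.

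To prove the recurrence I would set $M=N_k(q,n)$ and let $\chi$ be an arbitrary $q$-coloring of the $(k+1)$-edges of $K^{k+1}_N$ with $N=2^{M}$, the goal being to produce a monochromatic monotone path of length $n$. I would identify $[N]$ with $\{0,1\}^{M}$, viewed as the left-to-right ordered leaves of a complete binary tree of depth $M$, and use the branching function $\delta(u,v)\in[M]$ recording the most significant coordinate where $u$ and $v$ differ (equivalently, the depth of their least common ancestor), which satisfies $\delta(a,b)\ne\delta(b,c)$ and $\delta(a,c)=\min\{\delta(a,b),\delta(b,c)\}$ for all $a<b<c$. The aim is to push $\chi$ down to an auxiliary $q$-coloring $\chi'$ of the $k$-edges of $K^{k}_M$, engineered so that a monochromatic monotone $k$-edge path among the \emph{levels} $\{1,\dots,M\}$ lifts back to a monochromatic monotone $(k+1)$-edge path among the leaves: from increasing levels $d_1<\cdots<d_{n+k-1}$ one realizes leaves $y_1<\cdots<y_{n+k}$ whose consecutive branching levels are exactly the $d_i$, so that each sliding $(k+1)$-window $\{y_j,\dots,y_{j+k}\}$ corresponds to the consecutive $k$-window $\{d_j,\dots,d_{j+k-1}\}$. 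Since $M=N_k(q,n)$, the coloring $\chi'$ must contain a monochromatic monotone $k$-path of length $n$, and --- provided $\chi'$ is set up correctly --- lifting it produces the path sought for $\chi$.

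The crux, and the step I expect to be the main obstacle, is defining $\chi'$ so that the lift is genuinely monochromatic while paying only the \emph{single} exponential $2^{M}$. The naive choice --- coloring a $k$-set of levels by the $\chi$-color of one fixed canonical tuple of leaves realizing those branchings --- fails, because along the lifted path the true window $\{y_j,\dots,y_{j+k}\}$ is a \emph{shift} of the canonical tuple, and for arbitrary $\chi$ its color need not agree with the canonical one. Overcoming this is exactly where the monotone structure must be exploited beyond a generic Erd\H{o}s--Rado canonization, which would force a $\binom{M}{k}$ into the exponent and substantially weaken the bound. Following the pigeonhole philosophy behind Theorem~\ref{theo:main}, I would instead build the leaf path one vertex at a time, carrying as ``state'' only its top $k$ vertices --- the sole data needed to extend a monotone path --- and use the tree so that each added leaf advances exactly one level, keeping the reduction from uniformity $k+1$ to $k$ at the cost of a single tree level per extension (this is what yields the clean factor $2^{M}$ rather than a tower). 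Checking that this state-based lift assigns a consistent color to every window is the main technical point that remains to be verified.
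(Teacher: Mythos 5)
Your reduction of the theorem to the single recurrence $N_{k+1}(q,n)\le 2^{N_k(q,n)}$, followed by induction on $k$, is a legitimate strategy, and the recurrence itself is in fact true (in the paper it falls out of the enumerative characterization of Section~\ref{sec:kgraphs}: $N_{k+1}(q,n)-1=\rho_{k+1,q}(n)\le 2^{\rho_{k,q}(n)}=2^{N_k(q,n)-1}$). But your proposal does not prove the recurrence. The entire burden rests on defining the auxiliary coloring $\chi'$ of the levels $[M]$ so that a monochromatic monotone path in $\chi'$ lifts to one in $\chi$, and you never define $\chi'$; you correctly observe that the canonical-tuple definition fails because a lifted window is a shifted realization whose $\chi$-color need not match, and then defer the repair to a ``state-based'' construction whose consistency you explicitly say ``remains to be verified.'' That verification is the theorem. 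The reverse-stepping-up idea is structurally problematic for an upper bound: the tree gives you no mechanism to force an \emph{arbitrary} coloring $\chi$ of the leaves to be constant across all leaf tuples realizing a given level pattern, and without that there is no well-defined $\chi'$ at all. (A stepping-up map goes the other way---from a bad coloring of $K^k_M$ to a bad coloring of $K^{k+1}_{2^M}$---which is why it serves lower bounds.)

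For contrast, the paper sidesteps this entirely with a pigeonhole argument on the \emph{same} vertex set rather than a tree of leaves over levels. Given a $q$-coloring of $K^k_N$ with no monochromatic monotone path of length $n$, it records for each $(k-1)$-tuple the vector of longest monochromatic path lengths $C(x_1,\ldots,x_{k-1})\in[n]^q$, packages these into down-sets $D(x_1,\ldots,x_{k-2})\subseteq[n]^q$, and uses the down-sets themselves as colors of an auxiliary $(k-2)$-uniform hypergraph; the extension property of longest paths shows this auxiliary coloring has no monochromatic monotone path of length $2$, giving $N_k(q,n)\le N_{k-2}(N_3(q,n)-1,2)$ by Observation~\ref{obs:down-sets} and Theorem~\ref{theo:main}, which then unwinds into $t_{k-2}(N_3(q,n))$. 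Note also that the naive one-step analogue of this pigeonhole (coloring $(k{-}1)$-sets by $C$ itself, which takes $n^q$ values) yields only $N_{k+1}(q,n)\le N_k(n^q,2)$ and hence a tower with $n^{q}$ rather than $n^{q-1}$ at the top; getting the stated bound genuinely requires the down-set counting. If you want to salvage a one-uniformity-per-exponential scheme, the missing ingredient is exactly this kind of vertex-indexed invariant, not the binary tree.
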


This implies, by the upper bound in Corollary~\ref{coro:bound}, that $N_k(q,n)\leq t_{k-1}(2n^{q-1})$ for $k\geq 3$, as desired.
We note that using our enumerative characterization of $N_k(q,n)$ we can actually get a better upper bound than the one stated above; see the discussion in Section~\ref{sec:conclude}.

While we can prove Theorem~\ref{theo:transform} directly, and without using our characterization for $N_k(q,n)$, it appears that to prove a matching lower bound does require this enumerative characterization. Specifically, we have the following.

\begin{theo}\label{theo:transformlower} There is an absolute constant $n_0$, so that for every $k\geq 3$, $q\geq 2$ and $n \geq n_0$, we have
$$
N_k(q,n)\geq t_{k-2}(N_3(q,n)/3n^q) \;.
$$
\end{theo}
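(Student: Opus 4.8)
The plan is to lift the two-sided strategy behind Theorem~\ref{theo:main} to higher uniformity: pass to the enumerative world via the characterization of Theorem~\ref{theo:Nk2exact}, and there produce an explicit extremal object. Write $M_k:=N_k(q,n)-1$ for the number of level-$k$ objects counted by Theorem~\ref{theo:Nk2exact}, so that $M_3=P_{q-1}(n)$ by Theorem~\ref{theo:main}. It then suffices to establish two enumerative inequalities: a \emph{lossless} step $M_{k+1}\ge 2^{M_k}$ valid for every $k\ge 4$, and a single \emph{lossy} base estimate $M_4\ge 2^{M_3/(3n^q)}$. Indeed, recalling that $t_{k-1}(x)=2^{t_{k-2}(x)}$, iterating the lossless step upward from the base gives $M_k\ge t_{k-2}(M_3/(3n^q))$ by induction, and since $M_3=N_3(q,n)-1$ this is precisely the claimed bound up to the harmless additive constant, which is absorbed into the slack factor $3n^q$. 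The essential point, and the reason the characterization is needed, is that the loss must be confined to the innermost exponent; a per-step multiplicative or in-exponent loss would propagate into the tower and destroy the bound, so the higher steps have to be genuinely lossless.

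The heart of the argument is a stepping-up construction realizing $M_{k+1}\ge 2^{M_k}$ as an explicit lift of colorings. Given a $q$-coloring $c$ of $K^k_m$ with no monochromatic monotone path of length $n$, where $m=M_k$, I would build a $q$-coloring $c'$ of $K^{k+1}_N$ on $N=2^m$ vertices, identifying the vertices with binary strings of length $m$ ordered as integers. For distinct $v,w$ let $\delta(v,w)\in[m]$ be the most significant coordinate in which they differ, and for an edge $v_1<\cdots<v_{k+1}$ set $\delta_i=\delta(v_i,v_{i+1})$ for $i\in[k]$; the sequence $\delta_1,\dots,\delta_k$ encodes the branching pattern of the edge in the binary tree. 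The color $c'(\{v_1,\dots,v_{k+1}\})$ is then obtained by feeding this gap pattern into $c$, reading off a $k$-subset of $[m]$ via the local maxima of $(\delta_i)_i$, exactly as in the Erd\H{o}s--Hajnal--Rado step-up.

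The key point to verify is that $c'$ inherits the forbidden-path property. I would show that any monochromatic monotone path of length $n$ in $c'$, say on vertices $v_1<\cdots<v_{n+k}$, projects through its gap sequence to a monochromatic monotone path of the same length $n$ in $c$ on the coordinate set $[m]$, contradicting the choice of $c$. This rests on the standard monotonicity of $\delta$ under the integer order, namely that $\delta(v_1,v_{k+1})=\max_i\delta_i$ with the maximum attained uniquely (in particular $\delta(v_i,v_{i+1})\ne\delta(v_{i+1},v_{i+2})$), which guarantees that the projected coordinate sequence is strictly increasing and that consecutive projected $k$-edges overlap in $k-1$ elements. Checking that the local-maximum bookkeeping neither merges nor shortens the edges of the path along its entire length is the main obstacle; this is the combinatorial content that Theorem~\ref{theo:Nk2exact} lets us phrase as a clean statement about level-$k$ objects rather than ad hoc path surgery, and it is what makes the step lossless.

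Finally, the base estimate $M_4\ge 2^{M_3/(3n^q)}$ is where the single polynomial loss is incurred and is handled separately. Here one cannot afford to use all of $\{0,1\}^{M_3}$: instead I would restrict the lift to a sub-family of strings indexed so as to quotient out the at most $n^q$ possible \emph{profiles} of a vertex (the vector of per-color longest-monotone-path lengths, each ranging over $\{1,\dots,n\}$ across the $q$ colors), together with a constant factor $3$ of overhead, which is exactly the $3n^q$ appearing in the statement. Since this loss enters only once, at the innermost exponent, the outer tower built by the lossless steps is unaffected, and combining the base estimate with the iterated clean step completes the proof. The delicate quantitative part throughout is to guarantee losslessness for $k\ge 4$, and this is precisely what forces the appeal to the enumerative characterization rather than a purely hypergraph-theoretic step-up.
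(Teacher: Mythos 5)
Your reduction to the enumerative side via Theorem~\ref{theo:Nk2exact} is the right move and matches the paper, and your base estimate $M_4\ge 2^{M_3/(3n^q)}$ is essentially the paper's (it follows from counting down-sets generated by a largest ``layer'' of $\P^3_q[n]$, together with $N_2(q,n)\le n^q+1$). However, the step you call \emph{lossless}, namely $M_{k+1}\ge 2^{M_k}$ for $k\ge 4$, is false, and the whole iteration collapses without it. Here $M_{k+1}=\rho_{k+1,q}(n)$ is the number of down-sets in the poset $\P^{k}_q[n]$, and $M_k=\size{\P^k_q[n]}$; the number of down-sets of a poset $P$ equals $2^{\size{P}}$ only when $P$ is an antichain, and $\P^k_q[n]$ is very far from an antichain (it contains, e.g., the full chain from $\emptyset$ up to all of $\P^{k-1}_q[n]$). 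Indeed the paper records the opposite inequality $\rho_{k+1,q}(n)\le 2^{\rho_{k,q}(n)}$, and the true lower bound (Lemma~\ref{lemma:NkReclower}) is only $\rho_{k+1,q}(n)\ge 2^{\rho_{k,q}(n)/(\rho_{k-1,q}(n)+1)}$: one must restrict to a single cardinality layer to get an antichain, which costs a division by roughly $M_{k-1}$ in the exponent \emph{at every level}, not just at the base. Your stated reason for insisting on losslessness --- that a per-step in-exponent loss would propagate and destroy the tower --- is precisely the wrong intuition, and overcoming it is the actual content of the proof: the divisor at level $k$ is a tower of height $k-4$, whose logarithm is a tower of height $k-5$ and hence negligible compared to the level-$(k-1)$ quantity it divides. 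The paper formalizes this with a lemma on differences of towers ($t_k(a)-t_k(b)\ge t_k(a-2^{-(k-2)})$ for $a\ge b+1$, $a\ge3$), which shows the accumulated damage over all levels is an additive $\sum_i 2^{-i}<2$ in the innermost argument, absorbed into the constant $3$.

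Separately, the Erd\H{o}s--Hajnal--Rado stepping-up construction you propose for the (nonexistent) lossless step is not what is needed here even in spirit: the task at this point is purely enumerative (lower-bounding the number of down-sets in $\P^k_q[n]$), not lifting a coloring, and a step-up of colorings would in any case have to be re-derived with care since standard step-up arguments lose in the path length. If you replace your lossless step by the correct recursion $M_{k+1}\ge 2^{M_k/(M_{k-1}+1)}$ and add the tower-difference lemma to control the accumulated loss, your outline becomes the paper's proof.
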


The above result improves upon a lower bound of~\cite{FoxPaSuSu11}, but more importantly, matches (up to lower order terms) the upper bound of Theorem~\ref{theo:transform}.
One application of the above tight bounds is the following. Eli\'{a}\v{s} and Matou\v{s}ek~\cite{EliasMa11} have recently introduced another framework for studying generalizations of both ESL and EST in terms of the $k^{th}$ derivative of the function passing through a set of points. Motivated by the relation between their framework and the one introduced in~\cite{FoxPaSuSu11}, they asked
if for every $k\geq 3$ one has $N_k(2,n)=t_{k-1}(\Theta(n))$.
By combining Theorems~\ref{theo:transform}, Theorem~\ref{theo:transformlower}, and our bounds on $N_3(q,n)$ in Corollary~\ref{coro:bound}, we in particular get the following sharp (and positive) answer.

\begin{coro}\label{coro:matousek} For every $k \geq 3$ we have
$$
N_k(2,n)=t_{k-1}((2-o(1))n)\;,
$$
where the $o(1)$ term goes to $0$ as $n \rightarrow \infty$.
\end{coro}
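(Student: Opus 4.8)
The plan is to prove the two directions $N_k(2,n)\leq t_{k-1}(2n)$ and $N_k(2,n)\geq t_{k-1}((2-o(1))n)$ separately, feeding a \emph{sharp} three-uniform estimate into the tower-raising Theorems~\ref{theo:transform} and~\ref{theo:transformlower}. The engine is the exact base case: specializing Theorem~\ref{theo:main} to $q=2$ and invoking~(\ref{eq:dim1partition}) gives
\[
N_3(2,n)=P_1(n)+1=\binom{2n}{n}+1 .
\]
Using the elementary sandwich $\frac{4^n}{2n+1}\leq\binom{2n}{n}\leq 4^n$ (the lower bound because $\binom{2n}{n}$ is the largest of the $2n+1$ terms summing to $4^n$), I read off $\log_2 N_3(2,n)=2n-O(\log n)=(2-o(1))n$, i.e.\ $N_3(2,n)=2^{(2-o(1))n}$ with the top constant tending to $2$ from both sides. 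This already settles $k=3$, where $t_{k-1}=t_2$ is a single exponential. I would stress that this is the only place where the constant $2$ is pinned down: the upper bound $N_3(2,n)\leq 2^{2n}$ is exactly the $q=2$ case of Corollary~\ref{coro:bound}, but the generic lower bound there has top constant $\tfrac{2}{3\sqrt2}\approx 0.47$, so the matching lower constant $2$ genuinely requires the exact value furnished by Theorem~\ref{theo:main}.

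For $k\geq 4$ the key observation is the tower-collapse identity $t_{k-2}(2^{cn})=t_{k-1}(cn)$, valid for any constant $c$: substituting $2^{cn}$ at the top of a height-$(k-3)$ tower simply appends one more exponentiation, producing a height-$(k-2)$ tower topped by $cn$. For the upper bound I plug $N_3(2,n)\leq 2^{2n}$ into Theorem~\ref{theo:transform} and use that each $t_j$ is monotone increasing:
\[
N_k(2,n)\leq t_{k-2}\!\left(N_3(2,n)\right)\leq t_{k-2}(2^{2n})=t_{k-1}(2n).
\]
For the lower bound I feed $N_3(2,n)\geq \frac{4^n}{2n+1}$ into Theorem~\ref{theo:transformlower}, so that for $n$ large enough
\[
N_k(2,n)\geq t_{k-2}\!\left(N_3(2,n)/3n^2\right)\geq t_{k-2}\!\left(\tfrac{4^n}{3n^2(2n+1)}\right)=t_{k-2}\!\left(2^{(2-o(1))n}\right)=t_{k-1}\!\left((2-o(1))n\right),
\]
again by monotonicity and the collapse identity. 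Combining the two displays and applying $k-2$ iterated base-$2$ logarithms (which peel off exactly the $k-2$ twos of the tower) yields $\log_2^{(k-2)} N_k(2,n)=(2-o(1))n$, which is the asserted $N_k(2,n)=t_{k-1}((2-o(1))n)$.

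The only genuine point to verify, and it is a mild one, is that the spurious polynomial factor $3n^q=3n^2$ from Theorem~\ref{theo:transformlower}, together with the $\sqrt{\pi n}$-type gap between $\binom{2n}{n}$ and $4^n$, is absorbed into the $o(1)$ sitting at the very top of the tower and never propagates upward through the exponentiations. This is precisely what the collapse identity buys: a polynomial denominator only shifts the \emph{innermost} exponent by $O(\log n)$, replacing $2n$ by $2n-O(\log n)=(2-o(1))n$, and because this quantity occupies the top of $t_{k-1}$, monotonicity keeps the loss confined there. I would also note that the resulting $o(1)$ depends only on $n$ and not on $k$, which is what makes the statement uniform over all $k\geq 3$.
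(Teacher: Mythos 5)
Your proposal is correct and follows essentially the same route as the paper: it feeds the exact base case $N_3(2,n)=P_1(n)+1=\binom{2n}{n}+1=2^{(2-o(1))n}$ into Theorem~\ref{theo:transform} for the upper bound and Theorem~\ref{theo:transformlower} for the lower bound, using the collapse $t_{k-2}(2^{cn})=t_{k-1}(cn)$. Your remark that the constant $2$ in the lower bound genuinely requires the exact value from Theorem~\ref{theo:main} (rather than the generic lower bound of Corollary~\ref{coro:bound}, whose top constant is only $\tfrac{2}{3\sqrt{2}}$) is accurate and makes explicit a point the paper leaves implicit.
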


In fact, we may deduce the following, summarizing our bounds for general $q$.

\begin{coro}\label{coro:hyper} For every $k \geq 3$, $q \geq 2$, and sufficiently large $n$
we have
$$
t_{k-1}(n^{q-1}/2\sqrt{q})  \leq N_k(q,n) \leq t_{k-1}(2n^{q-1}) \;.
$$
\end{coro}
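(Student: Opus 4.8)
The plan is simply to combine the two-sided reduction between $N_k(q,n)$ and $N_3(q,n)$ furnished by Theorems~\ref{theo:transform} and~\ref{theo:transformlower} with the explicit bounds on $N_3(q,n)$ in Corollary~\ref{coro:bound}, and then to carry out the short tower arithmetic. No new idea is required here; the entire content of the argument is to track how the polynomial fudge factors are absorbed into the top level of the exponential tower without changing its height. Throughout I use that each $t_j$ is monotone increasing in its argument, which is immediate since $t_j$ is an iterated exponential.

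For the upper bound, I would feed $N_3(q,n) \leq 2^{2n^{q-1}}$ from Corollary~\ref{coro:bound} into Theorem~\ref{theo:transform}, obtaining by monotonicity
$$
N_k(q,n) \leq t_{k-2}(N_3(q,n)) \leq t_{k-2}\!\left(2^{2n^{q-1}}\right).
$$
It then remains to observe that placing $2^{2n^{q-1}}$ at the top of a tower of height $k-3$ is the same as placing $2n^{q-1}$ at the top of a tower of height $k-2$, i.e. $t_{k-2}(2^{2n^{q-1}})=t_{k-1}(2n^{q-1})$, which is exactly the claimed right-hand side. (The borderline case $k=3$ is consistent, since there $t_1$ is the identity and the estimate degenerates to $N_3(q,n)\le 2^{2n^{q-1}}$.)

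For the lower bound, I would likewise substitute $N_3(q,n) \geq 2^{\frac23 n^{q-1}/\sqrt{q}}$ from Corollary~\ref{coro:bound} into Theorem~\ref{theo:transformlower}, giving, again by monotonicity of $t_{k-2}$,
$$
N_k(q,n) \geq t_{k-2}\!\left(N_3(q,n)/3n^q\right) \geq t_{k-2}\!\left(2^{\frac23 n^{q-1}/\sqrt{q}}/3n^q\right).
$$
To conclude it suffices to show that for all sufficiently large $n$ the innermost quantity dominates $2^{n^{q-1}/2\sqrt{q}}$, since then $t_{k-2}(2^{n^{q-1}/2\sqrt{q}})=t_{k-1}(n^{q-1}/2\sqrt{q})$ is precisely the asserted left-hand side. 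Taking logarithms, this reduces to verifying
$$
\tfrac23 n^{q-1}/\sqrt{q} - \log_2(3n^q) \geq \tfrac12 n^{q-1}/\sqrt{q},
$$
that is, that the slack $\tfrac16 n^{q-1}/\sqrt{q}$ between the two exponents exceeds $\log_2 3 + q\log_2 n$. Since $q-1\ge 1$, the left-hand side grows at least linearly in $n$ whereas the right-hand side grows only logarithmically, so the inequality holds for all large $n$.

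The only thing to watch is the bookkeeping at the very top of the tower: the whole point is that dividing by the polynomial $3n^q$ (the error term coming from Theorem~\ref{theo:transformlower}) merely eats into the constant in front of $n^{q-1}/\sqrt{q}$ at the top level, degrading $\tfrac23$ to at least $\tfrac12$, while leaving the tower height $k-1$ untouched. This is the one place where the ``sufficiently large $n$'' hypothesis (together with the constant $n_0$ of Theorem~\ref{theo:transformlower}) is invoked, and it is the only mild obstacle; everything else is a direct substitution and a one-line tower identity.
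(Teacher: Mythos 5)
Your proposal is correct and follows exactly the route the paper intends: it combines Theorems~\ref{theo:transform} and~\ref{theo:transformlower} with Corollary~\ref{coro:bound} and absorbs the $3n^q$ loss into the top-level constant via the identity $t_{k-2}(2^x)=t_{k-1}(x)$. The verification that $\tfrac{1}{6}n^{q-1}/\sqrt{q}\geq \log_2(3n^q)$ for large $n$ is the only computation needed, and you handle it correctly.
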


\paragraph*{Organization:}
The rest of the paper is organized as follows. In Section~\ref{sec:3graphs} we mainly focus on $3$-uniform hypergraphs. We first give a new proof of EST by showing that $N_3(2,n) \leq P_1(n)+1$. We will then move on to prove the more general bound $N_3(q,n) \leq P_{q-1}(n)+1$. The proof of the general bound will turn out to be almost identical to the proof of the case $q=2$. Next we will prove the lower bound of Theorem~\ref{theo:main}, thus completing the characterization of $N_3(q,n)$. Since the proof of Theorem~\ref{theo:transform} (giving an upper bound on our Ramsey number for $k$-uniform hypergraphs) is so similar to the proof of Theorem~\ref{theo:main}, we will also give this proof in Section~\ref{sec:3graphs}. We will end Section~\ref{sec:3graphs} with the proof of Theorem~\ref{theo:lower}.

In Section~\ref{sec:kgraphs} we consider $k$-uniform hypergraphs.
We will start with proving a theorem analogous to Theorem~\ref{theo:main}, giving a characterization of $N_k(q,n)$ in terms of enumerating higher-order variants of $P_{q-1}(n)$. We will then show how one can derive the lower bound in Theorem~\ref{theo:transformlower} from this characterization.
In Section~\ref{sec:conclude} we give some concluding remarks and open problems; among other things, we discuss how the problem of estimating the Ramsey-type numbers $N_3(q,k)$, or equivalently, estimating the number of $d$-dimensional integer partitions $P_d(n)$, naturally leads to a problem of estimating the number of independent sets in graphs, a well-studied problem in enumerative combinatorics.

\section{New Bounds for $3$-Uniform Hypergraphs}\label{sec:3graphs}
We write $[n]$ for $\{1,\ldots,n\}$.
For $x,y\in[n]^d$ we denote $x\preceq y$ when $x_i\le y_i$ for all $1\le i\le d$.
A set $S\subseteq[n]^d$ is a \emph{down-set} if $s\in S$ implies $x\in S$ for all $x\preceq s$.
We will frequently use the following simple observation
stating that any down-set can be viewed as a $d-1$-dimensional partition.
This is best explained by Figures~\ref{fig:1partition} and~\ref{fig:2partition}, but we include the formal proof for completeness.

\begin{obs}\label{obs:down-sets}
The number of down-sets $S\subseteq[n]^d$ is $P_{d-1}(n)$.
\end{obs}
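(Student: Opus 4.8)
The plan is to exhibit an explicit bijection between the down-sets $S\subseteq[n]^d$ and the $(d-1)$-dimensional partitions counted by $P_{d-1}(n)$, that is, the $(d-1)$-dimensional hypermatrices indexed by $[n]^{d-1}$, with entries in $\{0,1,\ldots,n\}$, that are decreasing along every line. The guiding idea, which Figures~\ref{fig:1partition} and~\ref{fig:2partition} make visually transparent, is to collapse the last of the $d$ coordinates and record, for each position $(x_1,\ldots,x_{d-1})\in[n]^{d-1}$, the height of the column of $S$ sitting above it.

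Concretely, I would define the forward map by sending a down-set $S$ to the hypermatrix $A$ with
$$
A_{x_1,\ldots,x_{d-1}}=\size{\set{x_d\in[n]}{(x_1,\ldots,x_{d-1},x_d)\in S}}\;.
$$
The first thing to observe is that, because $S$ is a down-set, for each fixed $(x_1,\ldots,x_{d-1})$ the admissible values of $x_d$ are themselves downward closed in the single coordinate $x_d$, hence form an initial segment $\{1,\ldots,h\}$; thus $A_{x_1,\ldots,x_{d-1}}$ is exactly this height $h\in\{0,1,\ldots,n\}$, and the entries lie in the required range. For the inverse map I would send a partition $A$ to
$$
S=\set{(x_1,\ldots,x_d)\in[n]^d}{x_d\le A_{x_1,\ldots,x_{d-1}}}\;.
$$

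The two remaining verifications are where the down-set condition and the decreasing-partition condition must be matched up, and this is the only real content of the argument. In the forward direction I would check that $A$ is decreasing in each of its $d-1$ coordinates: increasing some $x_t$ by one can only shrink the column above it, since any $(x_1,\ldots,x_t+1,\ldots,x_d)\in S$ forces $(x_1,\ldots,x_t,\ldots,x_d)\in S$ by applying the down-set property to this comparable pair. In the reverse direction I would check that $S$ is a down-set: if $y\preceq x$ and $x\in S$, then monotonicity of $A$ gives $A_{y_1,\ldots,y_{d-1}}\ge A_{x_1,\ldots,x_{d-1}}\ge x_d\ge y_d$, so $y\in S$. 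Finally I would note that the two maps are visibly mutual inverses, which yields the bijection and hence the claimed count. I do not anticipate any genuine obstacle here; the one point to get right is simply aligning the directions of the inequalities in the two monotonicity conditions so that ``downward closed'' and ``decreasing along each line'' correspond to one another.
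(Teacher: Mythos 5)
Your proposal is correct and follows essentially the same route as the paper: both collapse the last coordinate and record the column height above each point of $[n]^{d-1}$ (your cardinality and the paper's $\max$ coincide precisely because, as you note, each column of a down-set is an initial segment). You simply spell out the inverse map and both monotonicity verifications, which the paper leaves as ``one can easily verify.''
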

\begin{proof}
We injectively map every down-set $S\subseteq[n]^d$ to a $(d-1)$-dimensional integer partition as follows.
For every $1\le i_1,\ldots,i_{d-1}\le n$ let
$A_{i_1,\ldots,i_{d-1}}=\max\{s : (i_1,\ldots,i_{d-1},s)\in S\}$ (where by convention, maximum over an empty set is $0$).
Then clearly $0\leq A_{i_1,\ldots,i_{d-1}} \leq n$ and, since $S$ is a down-set, it easily follows that
$A_{i_1,\ldots,i_t,\ldots,i_{d-1}} \geq A_{i_1,\ldots,i_t+1,\ldots,i_{d-1}}$
for every possible $1 \leq t \leq d-1$.
In other words, the (hyper)matrix $A$ is a $(d-1)$-dimensional $n\times\cdots\times n$ partition with entries from $\{0,1,\ldots,n\}$.
Furthermore, one can easily verify that this defines a bijection.
\end{proof}

\subsection{A new proof of the Erd\H{o}s-Szekeres Theorem}\label{subsec:newEST}

\begin{proof}
Fix a black/white coloring of the edges of $K^3_N$ that has no monochromatic monotone path of length $n$.
We need to show that $N \leq P_1(n)$.
For every pair of vertices $u<v$ 
denote $C(uv):=(1+n_b,1+n_w)$ where $n_b$ is the length (i.e., number of edges) of the longest black monotone path ending with $\{u,v\}$, and $n_w$ is defined in a similar way only with respect to white monotone paths. Notice that $C(uv) \in [n]^2$.
%
%
Define
$$D(v)=\{x \in [n]^2 : x \preceq C(uv) \mbox{ for some }u<v \} \;,$$
and note that $D(v)$ is (by definition) a down-set in $[n]^2$. It thus follows from Observation~\ref{obs:down-sets} (and the pigeonhole principle) that it is enough to show that $D(u) \neq D(v)$ for every pair of vertices.
So suppose to the contrary that $u < v$ and $D(u) = D(v)$.
By definition, $C(uv)\in D(v)$, and thus $C(uv)\in D(u)$.
Hence, (again, by definition) there is a vertex $t<u$ such that $C(uv)\preceq C(tu)$.
However, if the edge $\{t,u,v\}$ is colored black then we can extend the longest black monotone path ending at $\{t,u\}$ to a longer one ending at $\{u,v\}$, and similarly if $\{t,u,v\}$ is colored white. In either case we have $C(uv) \not \preceq C(tu)$---a contradiction.
\end{proof}

\subsection{Proof of Theorem~\ref{theo:main}}

We begin by generalizing our proof of EST to any number of colors.
In fact, the two proofs are nearly identical, and so we will be concise here.

\begin{lemma}\label{lemma:N3upper}
$N_3(q,n) \leq P_{q-1}(n)+1$.
\end{lemma}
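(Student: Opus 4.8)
The plan is to mimic the new proof of EST (the case $q=2$) almost verbatim, replacing the pair of labels by a $q$-tuple of labels, one per color. Fix a $q$-coloring of the edges of $K^3_N$ with no monochromatic monotone path of length $n$, and for every pair $u<v$ define $C(uv)=(1+n_1,\ldots,1+n_q)\in[n]^q$, where $n_c$ is the length of the longest monotone path in color $c$ whose last edge is $\{u,v\}$. (Here each $n_c\le n-1$ since there is no monochromatic path of length $n$, so indeed $C(uv)\in[n]^q$.) Then I would set
$$D(v)=\{x\in[n]^q : x\preceq C(uv)\text{ for some }u<v\},$$
which is a down-set in $[n]^q$ by construction.

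By Observation~\ref{obs:down-sets}, the number of down-sets in $[n]^q$ is exactly $P_{q-1}(n)$, so by pigeonhole it suffices to prove that the map $v\mapsto D(v)$ is injective, i.e.\ that $D(u)\ne D(v)$ for every pair of vertices $u<v$; this immediately yields $N\le P_{q-1}(n)$, hence $N_3(q,n)\le P_{q-1}(n)+1$. To establish injectivity I argue by contradiction exactly as in the $q=2$ case: suppose $u<v$ and $D(u)=D(v)$. Since $C(uv)\in D(v)=D(u)$, there is some vertex $t<u$ with $C(uv)\preceq C(tu)$. Whatever color $c$ the edge $\{t,u,v\}$ receives, the longest color-$c$ monotone path ending at $\{t,u\}$ can be extended by the edge $\{u,v\}$ to a strictly longer color-$c$ path ending at $\{u,v\}$. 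Thus the $c$-th coordinate of $C(uv)$ strictly exceeds that of $C(tu)$, contradicting $C(uv)\preceq C(tu)$.

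I do not expect any genuine obstacle here, which matches the paper's own remark that the two proofs are ``nearly identical.'' The only points requiring a moment's care are purely bookkeeping: verifying that each coordinate of $C(uv)$ lies in $[n]$ (so that $D(v)\subseteq[n]^q$ and Observation~\ref{obs:down-sets} applies with the correct dimension), and confirming that the path-extension argument goes through for \emph{every} color simultaneously so that the single coordinate forced to increase suffices to break $C(uv)\preceq C(tu)$. The conceptual content is entirely inherited from the $q=2$ argument; the generalization succeeds precisely because the label $C(uv)$ naturally lives in $[n]^q$ rather than $[n]^2$, and the number of down-sets in $[n]^q$ is governed by $P_{q-1}(n)$.
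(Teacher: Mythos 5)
Your proposal is correct and follows the paper's own proof essentially verbatim: the same $q$-tuple labels $C(uv)$, the same down-sets $D(v)$ in $[n]^q$ counted via Observation~\ref{obs:down-sets}, and the same extension argument to show $v\mapsto D(v)$ is injective. No differences worth noting.
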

\begin{proof}
Fix a $q$-coloring of the edges of $K^3_N$ that has no monochromatic monotone path of length $n$. 
We need to show that $N \leq P_{q-1}(n)$.
For every pair of vertices $u < v$ denote
$C(uv):=(1+n_1,\ldots,1+n_q)$ where $n_i$ is the length of the longest color-$i$ monotone path ending with $\{u,v\}$.
Notice that $C(u,v)\in[n]^q$.
Define $D(v)=\{x \in [n]^q : x \preceq C(uv) \mbox{ for some }u<v \}$. Since $D(v)$ is a down-set in $[n]^q$, it follows from Observation~\ref{obs:down-sets} that it suffices to show that $D(u) \neq D(v)$ for every pair of vertices.
So suppose to the contrary that $u < v$ and $D(u) = D(v)$.
By definition, $C(uv)\in D(v)$, and thus $C(uv)\in D(u)$. Hence, (again, by definition) there is a vertex $t<u$ such that $C(uv)\preceq C(tu)$.
However, the longest monochromatic monotone path ending at $\{t,u\}$ that has the same color as the edge $\{t,u,v\}$ can be extended to a longer one ending at $\{u,v\}$, implying $C(uv) \not \preceq C(tu)$---a contradiction.
\end{proof}

We now turn to prove the lower bound of Theorem~\ref{theo:main}, namely, that $N_3(q,n) > P_{q-1}(n)$. We first focus on the case of $q=2$ colors, which would simplify the notation we need.
Recall that in our proof of EST we assigned to each vertex a down-set, or equivalently, a line partition.
It therefore seems natural to do a similar thing here, and so in order to define a $2$-coloring of the edges of the complete $3$-uniform hypergraph, we identify each vertex with a distinct line partition.
Of course, we now need to define a total order on the vertex set, so we order the line partitions (i.e., vertices) lexicographically.
To be more precise: for two line partitions $A\neq B$, $A=(a_1,\ldots,a_n),B=(b_1,\ldots,b_n)$, denote $\delta(A,B)$ the smallest $i$ for which $a_i\neq b_i$; then $A$ is lexicographically smaller than $B$, denoted $A\lessdot B$, if $a_{\delta(A,B)}<b_{\delta(A,B)}$.\footnote{So for example, $(5,4,3,2,1) \lessdot (5,5,3,0,0)$.}

Our proof will follow by defining a certain coloring and showing that, roughly, if we look at the $\delta$-value of consecutive line partitions in a monochromatic monotone path, it is either strictly increasing, or else the ${\delta\text{th}}$ element of the line partitions along the path is strictly increasing.
Since this clearly cannot go on for long, any monochromatic monotone path would have to be short.

\begin{lemma}\label{lemma:N32lower}
$N_3(2,n) > P_1(n)$.
\end{lemma}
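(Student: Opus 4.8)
The plan is to construct an explicit $2$-coloring of the edges of $K^3_N$ with $N = P_1(n)$ vertices, where the vertices are identified with the $P_1(n)$ distinct line partitions, ordered lexicographically by $\lessdot$. The coloring of an edge $\{A, B, C\}$ (with $A \lessdot B \lessdot C$) should be determined by comparing how $B$ ``sits'' relative to $A$ versus how $C$ sits relative to $B$. Guided by the hint preceding the statement, I would color each edge according to whether the $\delta$-value \emph{advances} or \emph{stalls} as we move along the triple: concretely, set the color of $\{A,B,C\}$ to one color (say black) if $\delta(A,B) \le \delta(B,C)$ and to the other (white) if $\delta(A,B) > \delta(B,C)$. (The exact inequality and a careful tie-breaking rule using the values $a_{\delta}$ may need adjustment; pinning down the precise decision rule is part of the work.)

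The core of the argument is an \emph{invariant} for monochromatic monotone paths. Suppose $A^{(1)} \lessdot A^{(2)} \lessdot \cdots \lessdot A^{(m+1)}$ is a monochromatic monotone path of length $m$, so every consecutive triple $\{A^{(i)}, A^{(i+1)}, A^{(i+2)}\}$ receives the same color. I would prove, by induction on the position along the path, that in the black case the sequence $\delta_i := \delta(A^{(i)}, A^{(i+1)})$ is nondecreasing while at each coordinate where $\delta$ \emph{fails} to strictly increase the corresponding entry $a^{(i)}_{\delta_i}$ strictly increases; and symmetrically in the white case. The key step is the local transition: from two triples sharing an edge, the common color forces a comparison between $\delta_{i}$, $\delta_{i+1}$ and $\delta_{i+2}$ that propagates the monotonicity. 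Since the pair $(\delta_i, a^{(i)}_{\delta_i})$ lives in $[n] \times \{0,1,\dots,n\}$ and strictly advances (lexicographically, in the appropriate sense) at each step, the number of steps is bounded by roughly $n^2$, and in particular a monochromatic path cannot have length $n$ once the bookkeeping is set up correctly so that the relevant quantity is confined to an interval of length less than $n$.

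The main obstacle I anticipate is \emph{tuning the coloring rule and the invariant so they fit together}, i.e., choosing the decision rule on $\{A,B,C\}$ so that the two cases (strictly increasing $\delta$ versus strictly increasing $\delta\text{th}$-entry) are genuinely forced by the monochromaticity and genuinely exclusive. Line partitions are weakly decreasing sequences, so when $\delta(A,B) = \delta(B,C)$ the entries $a_\delta, b_\delta, c_\delta$ satisfy further monotonicity constraints inherited from the partition structure, and the rule must exploit exactly these constraints. A secondary subtlety is the \emph{boundary behavior}: controlling what happens when $\delta$ reaches coordinate $n$, or when an entry reaches its maximal value $n$, to ensure the potential function strictly advances for the full claimed number of steps and no longer. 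Once the transition lemma is verified for a single shared edge, the global bound on path length follows immediately by summing, so essentially all the difficulty is concentrated in the correct formulation and the one-step verification.
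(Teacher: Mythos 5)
Your setup is exactly the paper's: identify the $P_1(n)$ vertices with the distinct line partitions, order them by $\lessdot$, color each edge $\{A,B,C\}$ by comparing $\delta(A,B)$ with $\delta(B,C)$, and run an induction along the path. But the specific decision rule you propose --- black iff $\delta(A,B)\le\delta(B,C)$ --- together with the merged potential $(\delta_i, a^{(i)}_{\delta_i})$ does not give the bound you need, and this is precisely the point you leave unresolved (``pinning down the precise decision rule is part of the work''). With equality assigned to black, a black path only guarantees that the pair $(\delta_i,\text{entry at }\delta_i)$ increases lexicographically: when $\delta$ strictly increases, the entry at the new (larger) coordinate can \emph{drop}, since partitions are weakly decreasing in the index. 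So the potential lives in $[n]\times\{0,\ldots,n\}$ and you only get that monochromatic paths have length $O(n^2)$, which is useless here --- the lemma requires no monochromatic path of length $n$ on $\binom{2n}{n}$ vertices.

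The missing idea is to split the cases so that in each color a \emph{single} quantity ranging over $[n]$ strictly increases at every step. The paper colors $\{A,B,C\}$ black iff $\delta(B,C)>\delta(A,B)$ (strict), and white otherwise, i.e.\ when $\delta(B,C)\le\delta(A,B)$. Then on a black path $\delta$ itself strictly increases, so the length is less than $n$; and on a white path the index $\delta$ weakly \emph{decreases}, so writing $\delta=\delta(B,C)$, $\delta'=\delta(A,B)$ one has $C_\delta > B_\delta \ge B_{\delta'}$ (the first inequality from $B\lessdot C$, the second because $B$ is weakly decreasing and $\delta\le\delta'$), i.e.\ the entry $C_{\delta(B,C)}$ strictly increases along the path and is confined to $\{1,\ldots,n\}$. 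Your instinct that the weak monotonicity of partitions must be exploited in the tie case is correct, but the exploitation only works when the tie (and more generally the non-strict comparison) is grouped with the ``coordinate moves left'' color, not with the ``coordinate moves right'' one. As stated, your proof does not close; the orientation of the inequality is the entire content of the lemma.
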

\begin{proof}
Put $N=P_1(n)$, and identify each vertex of $K^3_N$ with a distinct line partition of length $n$ with entries from $\{0,1,\ldots,n\}$, where the different line partition are ordered lexicographically.
We need to color the edges so that there is no monochromatic monotone path of  length $n$.
Given an edge whose vertices are (identified with) the three line partitions $A\lessdot B\lessdot C$, we color it black if $\delta(B,C)>\delta(A,B)$, and otherwise white.

We claim that the following holds for any monochromatic monotone path on $\ell+2$ vertices (i.e., of length $\ell$):
denoting $B\lessdot C$ 
its last two vertices, if all edges of the path are black then $\delta(B,C)> \ell$, and if all edges of the path are white then $C_{\delta(B,C)}> \ell$. This would imply that there is no monochromatic monotone path of length $n$, as required.

We prove our claim by induction on $\ell$,
noting that for the base case $\ell=0$ both conditions trivially hold.
Consider a path of length $\ell\geq 1$, whose last three vertices are $A\lessdot B\lessdot C$, and denote $\delta=\delta(B,C)$, $\delta'=\delta(A,B)$.
By the definition of our coloring, if the path is black then $\delta>\delta'\ge \ell$. Otherwise, $\delta\leq\delta'$ and so $C_{\delta}>B_{\delta}\ge B_{\delta'}\ge \ell$,
which holds since $B\lessdot C$, and since $B$ is decreasing.
\end{proof}

To generalize the above lower bound to any number of colors, we first need to define a total ordering on partitions of any given dimension.
For two $d$-dimensional partitions $A\neq B$, denote $\delta(A,B)$ the lexicographically smallest $(i_1,\ldots,i_d)\in[n]^d$ such that $A_{i_1,\ldots,i_d}\neq B_{i_1,\ldots,i_d}$.
We consider $A$ to be smaller than $B$, denoted $A\lessdot B$, if $A_{\delta(A,B)}<B_{\delta(A,B)}$.\footnote{Notice this is the lexicographic ordering if we were to "flatten" the $d$-dimensional partitions into line partitions. Thus, $\lessdot$ is clearly a total order (it is transitive, and for every $A\neq B$ either $A\lessdot B$ or else $B\lessdot A$).}

\begin{lemma}\label{lemma:N3lower}
$N_3(q,n) > P_{q-1}(n)$.
\end{lemma}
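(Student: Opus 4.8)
The statement to prove is $N_3(q,n) > P_{q-1}(n)$, generalizing Lemma~\ref{lemma:N32lower} from $2$ to $q$ colors. The plan is to set $N=P_{q-1}(n)$ and identify each vertex of $K^3_N$ with a distinct $(q-1)$-dimensional partition (there are exactly $P_{q-1}(n)$ of these by Observation~\ref{obs:down-sets}), ordered by the total order $\lessdot$ just defined. I then need to define a $q$-coloring of the edges and exhibit an invariant, preserved along monochromatic monotone paths, which forces every such path to have length less than $n$.

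\paragraph*{The coloring and the invariant.}
For an edge on vertices $A\lessdot B\lessdot C$, the key quantity is $\delta(B,C)\in[n]^{q-1}$ and $\delta(A,B)\in[n]^{q-1}$. In the two-color case the color recorded whether $\delta$ jumped forward; now I have $q-1$ coordinates in which $\delta(B,C)$ can differ from $\delta(A,B)$, plus the possibility that $\delta(B,C)\preceq\delta(A,B)$ lexicographically, giving $q$ natural cases. Concretely, I expect to compare $\delta(A,B)$ and $\delta(B,C)$ under the lexicographic order on $[n]^{q-1}$: if $\delta(B,C)$ is lexicographically \emph{larger} than $\delta(A,B)$, I color the edge by the index of the \emph{first} coordinate in which $\delta(B,C)$ exceeds $\delta(A,B)$ (this yields $q-1$ colors); and if $\delta(B,C)$ is lexicographically smaller than or equal to $\delta(A,B)$, I use the remaining ``value'' color. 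This gives exactly $q$ colors. The claim to prove by induction on the length $\ell$ is a $q$-part statement about the last two vertices $B\lessdot C$ of a monochromatic path: for each of the $q-1$ ``coordinate'' colors $i$, the $i$th coordinate of $\delta(B,C)$ should exceed $\ell$; and for the ``value'' color, the entry $C_{\delta(B,C)}$ should exceed $\ell$. In all cases this bounds $\ell<n$, giving the lemma.

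\paragraph*{The induction.}
For the base case $\ell=0$ all $q$ conditions hold vacuously. For the inductive step I consider the last three vertices $A\lessdot B\lessdot C$ and set $\delta'=\delta(A,B)$, $\delta=\delta(B,C)$. If the edge $\{A,B,C\}$ gets coordinate-color $i$, then by the coloring rule $\delta$ and $\delta'$ agree on coordinates before $i$ and $\delta_i>\delta'_i$; combining $\delta'_i\ge\ell$ from the induction hypothesis (applied to the color of the length-$(\ell-1)$ subpath ending at $\{A,B\}$) with $\delta_i>\delta'_i$ should yield $\delta_i>\ell$. If instead the edge gets the value-color, then $\delta\preceq_{\mathrm{lex}}\delta'$, so $\delta$ is lexicographically at most $\delta'$; since $B\lessdot C$ means $B_\delta<C_\delta$, and monotonicity of the partition $B$ together with $\delta\preceq_{\mathrm{lex}}\delta'$ gives $B_\delta\ge B_{\delta'}\ge\ell$, I conclude $C_\delta>\ell$, exactly as in the two-color proof.

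\paragraph*{Main obstacle.}
The delicate point—and the part I would verify most carefully—is that the induction hypothesis for a subpath must be invoked \emph{in the color that subpath carries}, not the color of the full path, and that my $q$-part invariant is phrased so that whatever color the subpath happens to have, it delivers the inequality ($\delta'_i\ge\ell$ or $B_{\delta'}\ge\ell$) I need. In particular I must check that monotonicity of the partition entries is correctly chained against the \emph{lexicographic} comparison of the multi-dimensional indices $\delta,\delta'$: the step $B_\delta\ge B_{\delta'}$ requires that $\delta\preceq\delta'$ in the coordinatewise sense $\preceq$ (not merely lexicographically), so I would either argue that the value-color case actually yields coordinatewise domination, or replace the naive monotonicity step by one valid under lexicographic comparison. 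Getting this alignment exactly right is the crux; the rest is a faithful transcription of the $q=2$ argument.
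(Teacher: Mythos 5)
Your overall strategy is exactly the paper's: identify the $P_{q-1}(n)$ vertices with $(q-1)$-dimensional partitions ordered by $\lessdot$, color each edge $A\lessdot B\lessdot C$ by comparing $\delta(A,B)$ with $\delta(B,C)$, and run an induction showing that along a monochromatic path either some coordinate of $\delta$ or the entry $C_{\delta(B,C)}$ strictly increases. But the specific coloring you propose does not work, and the difficulty you flag as ``the crux'' in your last paragraph is a genuine gap, not a detail to be checked. You assign the $q$-th (``value'') color whenever $\delta(B,C)$ is lexicographically at most $\delta(A,B)$, and then need $B_{\delta}\ge B_{\delta'}$ where $\delta=\delta(B,C)$, $\delta'=\delta(A,B)$. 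A $d$-dimensional partition is monotone only with respect to the coordinatewise order $\preceq$, and lexicographic comparison does not imply coordinatewise comparison: for $\delta=(1,5)$ and $\delta'=(2,1)$ you have $\delta$ lexicographically smaller but $\delta\not\preceq\delta'$, and $B_{1,5}$, $B_{2,1}$ are genuinely incomparable in a plane partition. Neither of your two proposed rescues can succeed: the value-color case under your rule does \emph{not} yield coordinatewise domination (the example above is a counterexample), and there is no monotonicity of partition entries along the lexicographic order to substitute in.

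The fix is to change the coloring, not the monotonicity step. The paper colors $\{A,B,C\}$ by an index $i$ such that $(\delta(B,C))_i>(\delta(A,B))_i$ whenever \emph{some} such $i$ exists (choosing arbitrarily among several), and uses color $q$ only when \emph{no} coordinate increases --- which is precisely the statement $\delta(B,C)\preceq\delta(A,B)$ coordinatewise, exactly what the chain $C_\delta>B_\delta\ge B_{\delta'}\ge\ell$ requires. Your coordinate colors (first coordinate of lexicographic increase) would still support the invariant $\delta_i>\ell$ for color $i$, but they leave behind a value-color case that is too large. Note that in the $q=2$ case $\delta$ is a single integer, so lexicographic and coordinatewise order coincide and the issue is invisible; the correct generalization of ``$\delta$ jumped forward'' is ``some coordinate of $\delta$ jumped forward,'' not ``$\delta$ increased lexicographically.''
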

\begin{proof}
Put $d=q-1$, $N=P_d(n)$, and identify each vertex of $K^3_N$ with a distinct $n\times\cdots\times n$ $d$-dimensional partition with entries from $\{0,1,\ldots,n\}$.
Further, order the vertex set using the above defined $\lessdot$.
We need to color the edges with the colors $\{1,2,\ldots,d+1\}$ so that there is no monochromatic monotone path of length $n$. 
For every three $d$-dimensional partition $A\lessdot B\lessdot C$,
if there is an $1 \le i \le d$ satisfying $(\delta(B,C))_i>(\delta(A,B))_i$ we color $\{A,B,C\}$ by $i$ (if there are several such $i$ we choose one arbitrarily); if there is no such $i$, we color the edge by $d+1$.

We claim that the following holds for any monochromatic monotone path on $\ell+2$ vertices (i.e., of length $\ell$):
denoting $B\lessdot C$ 
its last two vertices and denoting $\delta(B,C)=(\delta_1,\ldots,\delta_d)$,
if all edges of the path are colored by $1\le i\le d$ then $\delta_i > \ell$, and if all edges of the path are colored by $d+1$ then $C_{\delta_1,\ldots,\delta_d} > \ell$.
This would imply that there is no monochromatic monotone path of length $n$.
%
We prove our claim by induction on $\ell$, noting that for the base case $\ell=0$ both conditions trivially hold. Consider a path of length $\ell\geq 1$ whose last three vertices are $A\lessdot B\lessdot C$, and denote
$(\delta_1,\ldots,\delta_d)=\delta(B,C)$, $(\delta'_1,\ldots,\delta'_d)=\delta(A,B)$.
By the definition of our coloring, if the path is colored by $1\le i\le d$ then
$\delta_i>\delta'_i\ge \ell$; otherwise,
$(\delta_1,\ldots,\delta_d)\preceq(\delta'_1,\ldots,\delta'_d)$ and so
$C_{\delta_1,\ldots,\delta_d}>B_{\delta_1,\ldots,\delta_d}\ge B_{\delta'_1,\ldots,\delta'_d}\ge \ell$,
which holds since $B\lessdot C$, and since $B$ is decreasing in each line.
This completes the proof.
\end{proof}

\begin{proof}[Proof of Theorem~\ref{theo:main}]
It follows immediately from Lemma~\ref{lemma:N3upper} and Lemma~\ref{lemma:N3lower} that $N_3(q,n)=P_{q-1}(n)+1$.
\end{proof}

Let us briefly mention the implications of the above arguments to a natural extension of $N_3(q,n)$.
Let $N_3(q,n_1,\ldots,n_q)$ be the smallest integer $N$ so that every coloring of the edges of $K^3_N$ using $q$ colors contains a color-$i$ monotone path of length $n_i$, in at least one of the colors $1\le i\le q$.
One can modify both the upper and lower bound proofs above in a straightforward manner to show that $N_3(q,n_1,\ldots,n_q)=P_{q-1}(n_1,\ldots,n_q)+1$, where $P_d(n_1,\ldots,n_d,n)$ is the number of $d$-dimensional $n_1\times\cdots\times n_d$ (hyper)matrices with entries from $\{0,1,\ldots,n\}$ that decrease in each line.
One can easily generalize the argument proving (\ref{eq:dim1partition}) to show $P_1(a,b)=\binom{a+b}{a}$, which implies the known result  $N_3(2,a,b)=\binom{a+b}{a}+1$.
As for the next case, MacMahon~\cite{MacMahon16} proved a result which is in fact more general than the one stated in~(\ref{eq:dim2partition}), namely,
$
P_2(a,b,c)=\prod_{i=1}^{a}\prod_{j=1}^{b}\prod_{k=1}^{c} \frac{i+j+k-1}{i+j+k-2} \;,
$
which gives an exact bound for $N_3(3,a,b,c)$.

\subsection{Proof of Theorem~\ref{theo:transform}}

It will be more convenient to prove the following stronger bound for every $k \geq 4$:
\begin{equation}\label{strongerupper}
N_k(q,n) \leq N_{k-2}(N_3(q,n)-1,2) \;.
\end{equation}
\begin{proof}
Fix a $q$-coloring of the edges of $K^k_N$ that has no monochromatic monotone path of length $n$. 
We need to show that $N < N_{k-2}(N_3(q,n)-1,2)$.
For any $k-1$ vertices $x_1<\cdots<x_{k-1}$ denote
$C(x_1,\ldots,x_{k-1}):=(1+n_1,\ldots,1+n_q)$ where $n_i$ is the length of the longest color-$i$ monotone path ending with $\{x_1,\ldots,x_{k-1}\}$, and notice that $C(x_1,\ldots,x_{k-1})\in[n]^q$.
For any $k-2$ vertices $x_2<\cdots<x_{k-1}$ we define
$$D(x_2,\ldots,x_{k-1})=\{y \in [n]^q : y \preceq C(x_1,x_2,\ldots,x_{k-1}) \mbox{ for some }x_1<x_2 \} \;,$$
which is of course a down-set in $[n]^q$.

Now, we define a coloring of the complete $(k-2)$-uniform hypergraph $K^{k-2}_N$ (on the same vertex set) by letting the color of an edge $\{x_1,\ldots,x_{k-2}\}$ be $D(x_1,\ldots,x_{k-2})$.
We claim that there is no monochromatic monotone path of length $2$ in our coloring of $K^{k-2}_N$.
Indeed, suppose for contradiction that $\{x_1,\ldots,x_{k-2}\}$ and $\{x_2,\ldots,x_{k-1}\}$ receive the same color, that is, $D(x_1,\ldots,x_{k-2})=D(x_2,\ldots,x_{k-1})$.
By definition, $C(x_1,\ldots,x_{k-1})\in D(x_2,\ldots,x_{k-1})$, and so by our assumption, $C(x_1,\ldots,x_{k-1})\in D(x_1,\ldots,x_{k-2})$.
Hence, (again, by definition) there is a vertex $x<x_1$ such that $C(x_1,\ldots,x_{k-1})\preceq C(x,x_1,\ldots,x_{k-2})$.
However, in the (given) coloring of $K^k_N$, the longest monochromatic monotone path ending at $\{x,x_1,\ldots,x_{k-2}\}$ that has the same color as the edge $\{x,x_1,\ldots,x_{k-1}\}$ can be extended to a longer one ending at $\{x_1,\ldots,x_{k-1}\}$, so $C(x_1,\ldots,x_{k-1}) \not \preceq C(x,x_1,\ldots,x_{k-2})$---a contradiction.

We conclude that in our coloring of $K^{k-2}_N$ there is no monochromatic monotone path of two edges. 
Therefore it must be the case that $N < N_{k-2}(c,2)$ where $c$ is the number of colors we used.
Since each $D(x_1,\ldots,x_{k-2})$ is a down-set in $[n]^q$,
Observation~\ref{obs:down-sets} implies $c\leq P_{q-1}(n)$, and since Theorem~\ref{theo:main} tells us that $P_{q-1}(n)=N_3(q,n)-1$ the proof is complete.
\end{proof}

Having completed the proof of (\ref{strongerupper}), let us prove that it implies the (weaker) bound 
$N_k(q,n) \leq t_{k-2}(N_3(q,n))$ stated in Theorem~\ref{theo:transform}.
We proceed by induction on $k$, noting that when $k=3$ there is nothing to prove, and that the case $k=4$ follows from (\ref{strongerupper}) and the fact that $N_2(q,2) \leq 2^q+1$ (see the discussion preceding (\ref{eq:PathEST})).
For the induction step, assuming $k \geq 5$, we have from (\ref{strongerupper}) that
$N_k(q,n) \leq N_{k-2}(q',2)$, where $q'=N_3(q,n)$, and by the induction hypothesis, $N_{k-2}(q',2) \leq t_{k-4}(N_3(q',2))$.
Applying the upper bound in Corollary~\ref{coro:bound} we deduce that
\begin{eqnarray*}
N_k(q,n) &\leq& t_{k-4}(N_3(q',2))\\
&\leq& t_{k-3}(2\cdot 2^{q'-1})\\
&=& t_{k-2}(q')\\
&=&  t_{k-2}(N_3(q,n))\;,
\end{eqnarray*}
as needed.


\subsection{An improved lower bound for $P_d(n)$}\label{subsec:PdnBound}

Recall that by Observation~\ref{obs:down-sets}, $P_{d-1}(n)$ is the number of down-sets in $[n]^d$.
Here it will be convenient to use the notion dual to that of a down-set.
A set $A\subseteq[n]^d$ is an \emph{antichain} if $a\in A$ implies $x\notin A$ for all $x\preceq a$.
In this section (and later on as well) it will
be useful to refer to the following simple observation.

\begin{obs}\label{obs:antichains}
The number of antichains $A\subseteq[n]^d$ is $P_{d-1}(n)$.
\end{obs}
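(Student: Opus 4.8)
The final statement to prove is Observation~\ref{obs:antichains}: the number of antichains $A \subseteq [n]^d$ equals $P_{d-1}(n)$.

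\bigskip

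The plan is to establish a bijection between antichains in $[n]^d$ and down-sets in $[n]^d$, and then invoke Observation~\ref{obs:down-sets}, which already tells us that down-sets are counted by $P_{d-1}(n)$. First I would recall the two notions as defined in the excerpt: a down-set $S$ satisfies $s \in S \Rightarrow x \in S$ for all $x \preceq s$, while an antichain $A$ satisfies $a \in A \Rightarrow x \notin A$ for all $x \preceq a$ (with $x \neq a$ understood). The natural duality to exploit is complementation: I would map an antichain $A$ to the set $S = [n]^d \setminus U(A)$, where $U(A) = \{x \in [n]^d : a \preceq x \text{ for some } a \in A\}$ is the up-set generated by $A$. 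Equivalently, and perhaps more cleanly, I would use the standard order-theoretic fact that down-sets are in bijection with antichains via taking the set of \emph{maximal} elements.

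\bigskip

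Concretely, the key steps would be: (i) Given a down-set $S$, define $\max(S)$ to be its set of $\preceq$-maximal elements; verify this is an antichain, since two distinct maximal elements cannot be comparable. (ii) Given an antichain $A$, define $D(A) = \{x \in [n]^d : x \preceq a \text{ for some } a \in A\}$, the down-set generated by $A$; verify this is indeed a down-set (transitivity of $\preceq$). (iii) Show these two maps are mutually inverse: for a down-set $S$ we have $D(\max(S)) = S$ because every element of a finite down-set lies below some maximal element, and for an antichain $A$ we have $\max(D(A)) = A$ because the maximal elements of the down-set generated by $A$ are exactly the members of $A$ (using that no element of $A$ lies strictly below another). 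Since $[n]^d$ is finite, every down-set has a well-defined set of maximal elements, so these maps are well-defined. This yields a bijection between down-sets and antichains in $[n]^d$, and combined with Observation~\ref{obs:down-sets} the count is $P_{d-1}(n)$.

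\bigskip

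I do not anticipate a serious obstacle here, as this is essentially the classical order-theoretic correspondence between antichains and down-sets (or order ideals) in a finite poset, specialized to the poset $[n]^d$ under coordinatewise $\preceq$. The only point requiring a little care is verifying step (iii), namely that $\max(D(A)) = A$: one must check that every $a \in A$ actually survives as a maximal element of $D(A)$, which follows because if $a \preceq a'$ for some $a' \in D(A)$ then $a' \preceq a''$ for some $a'' \in A$, forcing $a \preceq a''$ and hence $a = a''$ by the antichain property, so $a$ is maximal. Given how much machinery the paper has already set up, the cleanest exposition may simply assert this duality as the dual analogue of Observation~\ref{obs:down-sets} and note that the same $\max$/generated-down-set bijection proves it; alternatively one could give the direct complementation bijection $A \mapsto [n]^d \setminus U(A)$, which is arguably even shorter since it avoids mentioning maximal elements altogether. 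Either route is routine.
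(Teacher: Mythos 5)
Your proposal is correct and matches the paper's own argument: both pass from antichains to down-sets via the maximal-elements/generated-down-set bijection and then invoke Observation~\ref{obs:down-sets}. You simply spell out the verification of mutual inverseness in more detail than the paper, which asserts it as clear.
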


\begin{proof}
Observe that retaining only the $\prec$-maximal elements of a down-set yields a unique antichain. Moreover, every antichain can clearly be obtained from a down-set in this manner. Hence down-sets are in bijection with antichains, and from Observation~\ref{obs:down-sets} we deduce that the number of antichains in $[n]^d$ is exactly $P_{d-1}(n)$.

\end{proof}

We will use the antichain characterization of Observation \ref{obs:antichains} in order to prove a lower bound on $P_{d}(n)$.
First, we need a simple lemma.
For any $d\leq k\leq dn$ let $S_n(k,d)$ be the number of solutions to the equation $x_1+\cdots+x_d=k$ with $x_i\in[n]$ 
for every $1 \leq i \leq d$.
Notice that $S_2(k,d)$ are simply the binomial coefficients (specifically, $S_2(k,d)=\binom{d}{k-d}$).
The numbers $S_n(k,d)$ (which were already studied by Euler) satisfy many of the properties of binomial coefficients, e.g., symmetry about the middle $S_n(k,d)=S_n(dn-k,d)$.
In particular, it is known that $\max_k S_n(k,d)=:M_{d,n}$ is achieved at the middle, that is, at $k=d(n+1)/2$ (this already follows from the work of de Bruijn et al.~\cite{deBruijnTeKr51}).
We remark that it can be shown (using a version of the central limit theorem, see~\cite{MattnerRo08}) that when $d$ tends to infinity,
$$ 
M_{d,n} = \sqrt\frac{6}{\pi(n^2-1)d}\cdot n^d(1+o_d(1)) \quad
 \Big(\approx \sqrt\frac{6}{\pi}\cdot\frac{n^{d-1}}{\sqrt{d}}\Big) \;.
$$ 
However, we will need to estimate $M_{d,n}$ for \emph{any} $d$, and so we prove the following easy lower bound.

\begin{lemma}\label{lemma:midrankLower}
For every $d \geq 1$ and $n\ge 1$ we have
$$
M_{d,n} = \max_k S_n(k,d) \ge \frac23\cdot\frac{n^{d-1}}{\sqrt{d}} \;.
$$
\end{lemma}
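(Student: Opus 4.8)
The plan is to reinterpret $M_{d,n}$ probabilistically and then exploit the fact that a sum of independent bounded variables cannot be too spread out. Let $X_1,\dots,X_d$ be independent, each uniform on $[n]=\{1,\dots,n\}$, and set $S=X_1+\cdots+X_d$. Since every point of $[n]^d$ is equally likely, $S_n(k,d)=n^d\cdot\Pr[S=k]$, and hence $M_{d,n}=n^d\cdot m$, where $m:=\max_k\Pr[S=k]$ is the largest atom of $S$. A direct computation gives $\Ex[X_i]=(n+1)/2$ and $\Varx(X_i)=(n^2-1)/12$, so that $\mu:=\Ex[S]=d(n+1)/2$ and $\sigma^2:=\Varx(S)=d(n^2-1)/12$; in particular $12\sigma^2=d(n^2-1)$. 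It then remains only to convert the (small) variance of $S$ into a lower bound on its largest atom $m$.

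The heart of the argument is the elementary inequality that an integer-valued random variable whose largest atom is $m$ must have variance at least $(m^{-2}-1)/12$, equivalently $m\ge 1/\sqrt{12\,\Varx(S)+1}$. I expect this step to be the main obstacle, since it is exactly where the sharp constant is produced. The cleanest route I know is by \emph{dithering}: let $U$ be uniform on $[-\tfrac12,\tfrac12]$, independent of $S$, and consider $Z=S+U$. The density of $Z$ is bounded everywhere by $m$ (it equals $\Pr[S=\mathrm{round}(z)]\le m$ away from half-integers), while $\Varx(Z)=\Varx(S)+\Varx(U)=\sigma^2+\tfrac1{12}$. A one-line rearrangement (``bathtub'') argument shows that any density bounded by $m$ has variance at least that of the uniform density on an interval of length $1/m$, namely $1/(12m^2)$, because pushing mass toward the mean only decreases $\int(z-\mu)^2 f(z)\,dz$. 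Combining these gives $\sigma^2+\tfrac1{12}\ge 1/(12m^2)$, which is precisely the claimed inequality.

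Plugging in $12\sigma^2=d(n^2-1)$ yields $m^{-2}\le d(n^2-1)+1$, and therefore $M_{d,n}=n^d m\ge n^d/\sqrt{d(n^2-1)+1}$ (already tight at $d=1$, where it gives $M_{1,n}=1$). The proof then finishes with a routine inequality: $n^d/\sqrt{d(n^2-1)+1}\ge \tfrac23\,n^{d-1}/\sqrt d$ is equivalent, after squaring and clearing denominators, to $9n^2d\ge 4\bigl(d(n^2-1)+1\bigr)$, i.e.\ to $5dn^2+4d-4\ge0$, which holds for all $d\ge1,\ n\ge1$. I would note that the same constant $\tfrac23$ also drops out of a cruder Chebyshev estimate—take $t=\sqrt3$, so that $1-t^{-2}=\tfrac23$ and the central window $|k-\mu|<t\sigma$ has length $2t\sigma=\sqrt{d(n^2-1)}\le n\sqrt d$—but the dithering/variance route has the advantage of absorbing the stubborn additive ``$+1$'' coming from counting integers in that window, so that the bound holds cleanly for \emph{every} $d$ and $n$ rather than only asymptotically.
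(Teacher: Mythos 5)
Your proof is correct, and while it shares the paper's probabilistic setup, the key step is genuinely different. Both arguments write $S_n(k,d)=n^d\Pr[S=k]$ for $S$ a sum of $d$ independent uniforms on $[n]$ and compute $\mu=d(n+1)/2$, $\sigma^2=d(n^2-1)/12$; the divergence is in how the small variance is converted into a large atom. The paper uses Chebyshev with $\lambda=\sqrt3$ to trap a $\tfrac23$-fraction of the mass in a window of length $2\lambda\sigma\le n\sqrt d$ and then pigeonholes, which is where the constant $\tfrac23$ comes from (and which, read literally, divides by $2\lambda\sigma$ rather than by the number of integers in the window, at most $2\lambda\sigma+1$ --- a small slack the paper absorbs silently). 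You instead prove the clean anti-concentration inequality $\max_k\Pr[S=k]\ge 1/\sqrt{12\Varx(S)+1}$ by dithering with an independent uniform on $[-\tfrac12,\tfrac12]$ and a rearrangement bound on the variance of a density bounded by $m$ (to be fully rigorous, center the comparison uniform density at $\Ex[Z]$ so that the competitor really lower-bounds the variance and not just a second moment). This buys you an exact, all-$(d,n)$ bound $M_{d,n}\ge n^d/\sqrt{d(n^2-1)+1}$, tight at $d=1$, with asymptotic constant $1$ rather than $\tfrac23$ in front of $n^{d-1}/\sqrt d$ (the truth being $\sqrt{6/\pi}$ as $d\to\infty$, as the paper notes); the lemma as stated then follows from the elementary inequality $5dn^2+4d-4\ge0$. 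The price is that your key step, though standard, requires a genuine (if short) rearrangement argument, whereas the paper's Chebyshev-plus-pigeonhole route is entirely off-the-shelf.
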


\begin{proof}
Let $x_1,\ldots,x_d$ be randomly (independently and uniformly) chosen from $[n]$, and set $X=x_1+\cdots+x_d$. Notice that the expectation of $X$ is $\mu:=d(n+1)/2$, and its variance is $\sigma^2:=d(n^2-1)/12 \le dn^2/12$.
By Chebyschev's Inequality, for $\lambda>0$ to be chosen later,
${\Pr\big(\lvert X-\mu\rvert \ge \lambda\sigma\big)} \le \lambda^{-2}.$
Put $\alpha=1-\lambda^{-2}$. Since $\Pr(X=k)=S_n(k,d)/n^d$, we have
$$
\sum_{k=\lceil\mu-\lambda\sigma\rceil}^{\lfloor\mu+\lambda\sigma\rfloor} S_n(k,d) \ge \alpha n^d \;.
$$
Using the pigeonhole principle, we can bound from below the largest $S_n(k,d)$ in the range above, which is $M_{d,n}$, by
$$
\frac{\alpha n^d}{2\lambda\sigma} \geq \frac{\sqrt{3}\alpha}{\lambda}\cdot\frac{n^{d-1}}{\sqrt{d}} \;.
$$
Choosing $\lambda=\sqrt3$ so as to maximize $\alpha/\lambda$ yields the desired bound.
\end{proof}

We now use Lemma~\ref{lemma:midrankLower} above to deduce a lower bound
on $P_{d-1}(n)$.

\begin{proof}[Proof of Theorem~\ref{theo:lower}]
We show that
\begin{equation}\label{eq:Pdlower}
P_{d-1}(n) \geq 2^{M_{d,n}} \;, 
\end{equation}
which, by Lemma~\ref{lemma:midrankLower}, would complete the proof.
For $x=(x_1,\ldots,x_d)\in[n]^d$ write $\size{x}=\sum_{i=1}^d x_i$, and
note that $S_n(k,d)$ is the number of $x$ satisfying $\size{x}=k$.
Let $d\leq k\leq dn$, and let $A\subseteq[n]^d$ be a set whose every member $a$ satisfies $\size{a}=k$. Then $A$ is an antichain since every $x\preceq a$, where $a\in A$ and $x\neq a$, must satisfy $\size{x}<\size{a}=k$, implying that $x\notin A$.
It follows that the number of antichains in $[n]^d$ is at least $2^{S_n(k,d)}$. Taking $k$ so as to maximize $S_n(k,d)$ shows that the number of antichains in $[n]^d$ is at least $2^{M_{d,n}}$. Observation~\ref{obs:antichains} now proves (\ref{eq:Pdlower}), as needed.
\end{proof}

\section{General Hypergraphs}\label{sec:kgraphs}

Our goal in this section is to give an enumerative characterization of $N_k(q,n)$ analogous to the one we have previously obtained for $N_3(q,n)$ in Theorem~\ref{theo:main}.
We will show that just as the numbers $N_3(q,n)$ are closely related to high-dimensional integer partitions, the numbers $N_k(q,n)$ are closely related to what can naturally be thought of as higher-order analogues of partitions.
For simplicity of presentation, we will focus on higher-order \emph{line} partitions, which we would use to characterize $N_k(2,n)$. The characterization for $N_k(q,n)$ is obtained by exactly the same arguments.

\begin{figure}
\centering
\includegraphics[scale=0.5]{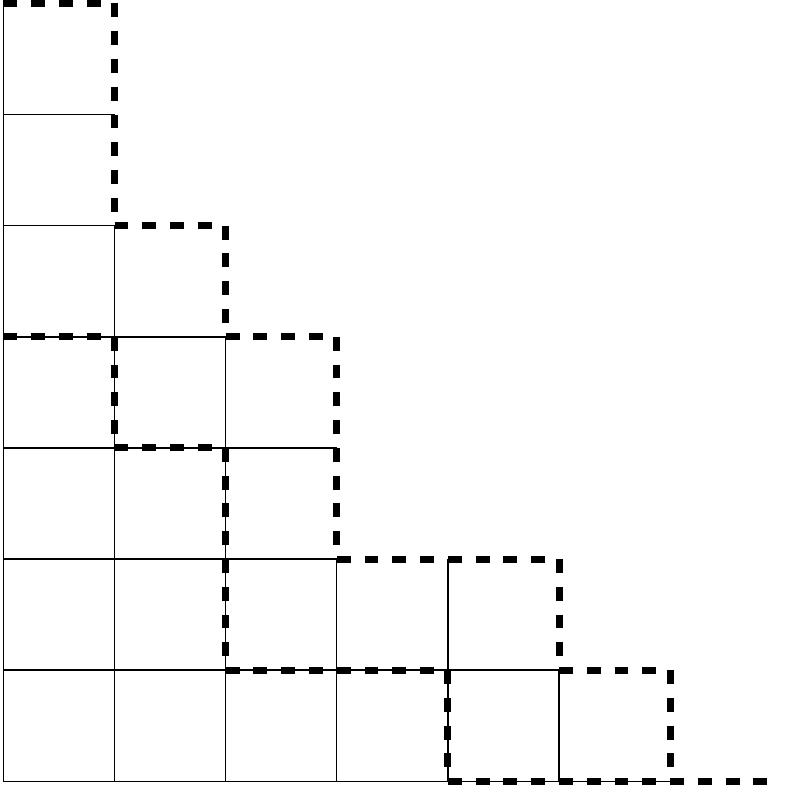}
\caption{A line partition contained in another.}
\label{fig:1partitionInclusion}
\end{figure}

To state our characterization we will need to restate Observation~\ref{obs:down-sets} in a language that will be somewhat easier to generalize.
So for what follows, let us set $\P^2[n] = [n]^2$, and let $\P^3[n]$ denote the family of line partitions as defined in Section~\ref{sec:intro} (so $\size{\P^3[n]}=P_1(n)$).
Recall that by Observation~\ref{obs:down-sets}, line partitions are in bijection with down-sets in $[n]^2$; put in other words, every line partition $\F \in \P^3[n]$ is a subset of $\P^2[n]$ with the property that if $x \in \F$ then so is every $x' \preceq x$. Now, to make the above easier to generalize, we think of any $x=(x_1,x_2) \in[n]^2$ as a multiset (with $x_i-1$ being the multiplicity of the $i^{th}$ element). Accordingly, we henceforth use the notation $x' \subseteq x$ instead of $x' \preceq x$.\footnote{For consistency, one can instead think of $x$ as a (standard) set, say by using the ``unary representation'': Letting $A,B$ be two disjoint ordered sets of cardinality $n$, we may represent $x$ as the subset of $A\cup B$ consisting of the first $x_1$ members of $A$ and the first $x_2$ members of $B$.}

We define $\P^4[n]$ in a similar manner, so that the members of $\P^4[n]$ are down-sets in $\P^3[n]$;
that is, each $\F \in \P^4[n]$ is a collection of line partitions (i.e., elements of $\P^3[n]$) with the property that if a line partition $L$ belongs to $\F$ then so do all line partitions $L' \subseteq L$. Note that the way we have redefined line partitions in the previous paragraph allows
us to talk about one line partition being a subset of another. For the sake of clarity the reader can check Figure~\ref{fig:1partitionInclusion} which depicts inclusion of two line partitions; furthermore, see Section~\ref{sec:conclude} for a detailed discussion about $\P^4[n]$.

In general, we inductively define $\P^k[n]$, whose members are the down-sets in $\P^{k-1}[n]$, as follows.
\begin{definition}\label{def:highOrder}
Let $\P^2[n] = [n]^2$ and suppose we have already defined $\P^{k-1}[n]$. A set $\F\subseteq\P^{k-1}[n]$ is in $\P^{k}[n]$ if $S\in\F$ implies $S'\in\F$ for any $S' \subseteq S$.
\end{definition}

We denote the cardinality of $\P^k[n]$ by $\rho_k(n)$.
In what follows, we will sometimes refer to the members of $\P^k[n]$ as {\em line partitions of order $k$}.
We obviously have $\rho_k(n)\leq 2^{\rho_{k-1}(n)}$ for any $k \geq 3$, and in particular, $\rho_4(n)\le 2^{\binom{2n}{n}}\le 2^{2^{2n}}$. Summarizing, we have
$$
\rho_2(n)=n^2, \qquad \rho_3(n)=\binom{2n}{n}, \qquad \rho_k(n) \leq t_{k-1}(2n)
$$
for any $k\geq 3$.
Our characterization of $N_k(2,n)$ is thus the following.

\begin{theo}\label{theo:Nk2exact} For every $k \geq 2$ and $n \geq 2$ we have
$$
N_k(2,n) = \rho_k(n)+1\;.
$$
\end{theo}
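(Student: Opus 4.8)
The plan is to establish Theorem~\ref{theo:Nk2exact} by proving the matching upper and lower bounds $N_k(2,n) \leq \rho_k(n)+1$ and $N_k(2,n) > \rho_k(n)$, each by a direct generalization of the corresponding $3$-uniform argument (Lemmas~\ref{lemma:N3upper} and~\ref{lemma:N32lower}), now carried out inductively on the uniformity $k$. The base case $k=3$ is exactly Theorem~\ref{theo:main} for $q=2$ (together with $\rho_3(n)=\binom{2n}{n}=P_1(n)$), and the base case $k=2$ records the elementary fact $N_2(2,n)=n^2+1=\rho_2(n)+1$ coming from Seidenberg's argument.

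Let me think about the upper bound first.

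The plan is to prove the two inequalities $N_k(2,n) \le \rho_k(n)+1$ and $N_k(2,n) > \rho_k(n)$ separately, each by lifting the corresponding $3$-uniform argument (Lemma~\ref{lemma:N3upper} and Lemma~\ref{lemma:N32lower}) to arbitrary uniformity, with $k=2$ being the elementary Seidenberg bound $N_2(2,n)=n^2+1=\rho_2(n)+1$. Both directions are driven by the recursive Definition~\ref{def:highOrder}: an element of $\P^k[n]$ is a down-set in $\P^{k-1}[n]$, and I will exploit the fact that the iterated down-set construction of the upper bound and the iterated first-difference construction of the lower bound both climb this hierarchy one level per unit of uniformity.

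For the upper bound I would fix a $2$-coloring of $K^k_N$ with no monochromatic monotone path of length $n$ and build a hierarchy of labels $C^{(m)}$ indexed by ordered $m$-tuples of vertices, for $m=k-1,k-2,\ldots,1$, so that $C^{(m)}$ takes values in $\P^{k-m+1}[n]$. At the top level $C^{(k-1)}(x_1,\ldots,x_{k-1}):=(1+n_b,1+n_w)\in[n]^2=\P^2[n]$ records the lengths of the longest black and white monotone paths whose last $k-1$ vertices are $x_1<\cdots<x_{k-1}$, exactly as in Lemma~\ref{lemma:N3upper}. Descending one level I set
$$C^{(m)}(x_{k-m},\ldots,x_{k-1}):=\{\,S : S\subseteq C^{(m+1)}(w,x_{k-m},\ldots,x_{k-1})\text{ for some }w<x_{k-m}\,\},$$
which is a down-set in $\P^{k-m}[n]$ and hence lies in $\P^{k-m+1}[n]$. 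Thus $C^{(1)}(v)\in\P^k[n]$ for each vertex $v$, and by the pigeonhole principle it suffices to show $v\mapsto C^{(1)}(v)$ is injective. I would obtain injectivity from the non-containment statement
$$C^{(m)}(x_{k-m},\ldots,x_{k-1})\not\subseteq C^{(m)}(w,x_{k-m},\ldots,x_{k-2})\qquad(w<x_{k-m}),$$
proved by downward induction on $m$: the base case $m=k-1$ is precisely the path-extension observation of Lemma~\ref{lemma:N3upper} (the edge $\{w,x_{k-m},\ldots,x_{k-1}\}$ prolongs a monochromatic path, strictly increasing one coordinate), and the inductive step unwinds the definition of $C^{(m)}$ to produce a witness contradicting the hypothesis for $m+1$. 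Taking $m=1$ gives $C^{(1)}(v)\not\subseteq C^{(1)}(w)$ for $w<v$, hence injectivity and $N\le\rho_k(n)$.

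For the lower bound I would identify the $\rho_k(n)$ vertices of $K^k_N$ with the elements of $\P^k[n]$ and construct an explicit $2$-coloring with no monochromatic monotone path of length $n$, generalizing Lemma~\ref{lemma:N32lower} by recursion on $k$ (base case $k=3$). I first extend the order $\lessdot$ and the first-difference map to every level: for distinct $A,B\in\P^m[n]$ let $\delta_m(A,B)$ be the $\lessdot$-minimal element of $\P^{m-1}[n]$ lying in $A\triangle B$, and declare $A\lessdot B$ iff $\delta_m(A,B)\in B\setminus A$. Given the $(k-1)$-uniform coloring $\chi_{k-1}$ supplied by induction, I color an edge $A_1\lessdot\cdots\lessdot A_k$ of $K^k$ by inspecting the consecutive first-differences $\delta_k(A_1,A_2),\ldots,\delta_k(A_{k-1},A_k)\in\P^{k-1}[n]$: if they form a strictly $\lessdot$-increasing sequence I assign the color that $\chi_{k-1}$ gives that $(k-1)$-edge, and otherwise I assign a fixed default color. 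The invariant to prove by induction on the path length $\ell$ is that along any monochromatic path the relevant first-differences either form a monochromatic $(k-1)$-path, whose length is controlled by the inductive hypothesis, or else a quantity bounded by $n$ strictly increases at each step; either way no monochromatic path reaches length $n$.

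The routine part is the upper bound: once the $C^{(m)}$ hierarchy is set up, the downward induction merely mirrors Lemma~\ref{lemma:N3upper}. The main obstacle is the lower bound, specifically making the recursive coloring interact correctly with the monotone-path structure. Two points need care: verifying that $\lessdot$ is a genuine total order on each $\P^m[n]$ and that $\delta_m$ behaves like a lexicographic first-difference (so that comparisons descend cleanly one level), and formulating the induction invariant so that the default-color branch genuinely forces a bounded quantity to increase rather than merely failing to be monochromatic one level down. I expect the cleanest route is to prove a single combined statement by induction on $\ell$ that, for each color class, tracks either the $\lessdot$-value of the final first-difference or the value of the underlying entry it points to, exactly as the black and white cases are tracked in the proof of Lemma~\ref{lemma:N32lower}.
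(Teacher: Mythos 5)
Your upper bound is essentially the paper's own proof of Lemma~\ref{lemma:Nkupper}: your hierarchy $C^{(m)}$ is exactly the paper's recursively defined down-sets $D(x_{k-r},\ldots,x_{k-1})$, and your downward induction on $m$ is the contrapositive of the paper's iterated implication (\ref{eq:recursiveD}). That half is correct and needs no further comment.

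The lower bound, however, departs from the paper and has a genuine gap. The paper does \emph{not} recurse on $k$ by feeding the first differences into a $(k-1)$-uniform coloring. Instead it observes (Claim~\ref{claim:threePartitions}) that the relation preserved by the first-difference map is $\nsupseteq$, not $\lessdot$: from $\F_1\nsupseteq\cdots\nsupseteq\F_k$ one gets $\delta(\F_1,\F_2)\nsupseteq\cdots\nsupseteq\delta(\F_{k-1},\F_k)$, and it iterates $\delta$ along $\nsupseteq$-chains all the way down to $[n]^2$, where a single coordinate comparison defines the two colors. Your plan instead requires the $\delta$'s to be $\lessdot$-increasing in order to treat them as an edge of $K^{k-1}$ on $\P^{k-1}[n]$ and invoke $\chi_{k-1}$. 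But $A\nsupseteq B$ does not imply $A\lessdot B$ (the lex-first element of $A\triangle B$ may lie in $A$ even though $B\setminus A\neq\emptyset$), so the default branch is genuinely nonempty, and this is where the argument breaks. First, since you only have two colors, the default color must coincide with one of the colors used by $\chi_{k-1}$, so a monochromatic path in that color can interleave default-branch edges with recursive-branch edges, and neither invariant (the $(k-1)$-path bound from induction, or the ``increasing quantity'') applies across such a mixture. Second, the quantity you propose to track in the default branch --- the $\lessdot$-value of the final first difference --- ranges over $\P^{k-1}[n]$, a set of size $\rho_{k-1}(n)\gg n$; a strictly $\lessdot$-increasing sequence of such values can have length far exceeding $n$, so even if you could show it increases, it would not bound the path length by $n$. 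In the $k=3$ case the analogous quantity ($\delta(B,C)$ or the entry $C_{\delta(B,C)}$) is an integer in $[n]$, which is precisely why Lemma~\ref{lemma:N32lower} works; that feature does not survive your recursion.

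You flag this as ``the main obstacle'' but the proposed fix (``track either the $\lessdot$-value of the final first-difference or the value of the underlying entry it points to'') does not resolve it for the reasons above. The repair is to abandon the recursion on $k$ for the coloring and instead iterate $\delta$ on $\nsupseteq$-chains: after $k-2$ applications each window of $k-1$ consecutive vertices of a path yields a pair $\delta^*\in[n]^2$, consecutive windows share $k-2$ vertices and hence yield $\nsupseteq$-comparable pairs, and coloring by which coordinate increases gives the bound $\ell+1\le n$ directly. That is the paper's Lemma~\ref{lemma:Nklower}.
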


Note that Theorem~\ref{theo:Nk2exact} subsumes both ESL and EST (as well as our main result in Theorem~\ref{theo:main} when $q=2$; see (\ref{eq:general}) at the end of this section for arbitrary $q$).
The proof of Theorem~\ref{theo:Nk2exact} follows immediately from Lemmas~\ref{lemma:Nkupper} and~\ref{lemma:Nklower} stated below.

\begin{lemma}\label{lemma:Nkupper} 
$N_k(2,n)\le \rho_k(n)+1.$
\end{lemma}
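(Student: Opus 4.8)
The plan is to generalize the pigeonhole argument used in Lemma~\ref{lemma:N3upper} (and its $3$-uniform progenitor, the new proof of EST) from $3$-uniform hypergraphs to $k$-uniform ones, using the higher-order line partitions $\P^k[n]$ in place of the ordinary down-sets/partitions in $[n]^2$. Fix a $2$-coloring (black/white) of the edges of $K^k_N$ with no monochromatic monotone path of length $n$; the goal is to exhibit an injection from the vertex set into $\P^k[n]$, which, since $\size{\P^k[n]}=\rho_k(n)$, forces $N \le \rho_k(n)$ and hence $N_k(2,n) \le \rho_k(n)+1$. The object assigned to each vertex must encode, recursively, the lengths of the longest monochromatic paths terminating at each initial segment ending at that vertex.

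The natural construction is to define, for each increasing tuple of $k-1$ vertices $x_1 < \cdots < x_{k-1}$, a base label $C(x_1,\ldots,x_{k-1}) = (1+n_b,\,1+n_w) \in [n]^2 = \P^2[n]$, where $n_b,n_w$ are the lengths of the longest black (resp.\ white) monotone paths ending with the edge $\{x_1,\ldots,x_{k-1}\}$; the no-long-path hypothesis guarantees each coordinate lies in $[n]$. I would then peel off one vertex at a time, exactly as the step from $K^k_N$ to $K^{k-2}_N$ peels off vertices in the proof of~(\ref{strongerupper}), but here descending through all the orders $\P^2[n] \subseteq \P^3[n] \subseteq \cdots$. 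Concretely, for a tuple of $k-2$ vertices I take the down-set in $\P^2[n]$ generated by the labels $C$ over all ways of prepending a smaller first vertex; this down-set is an element of $\P^3[n]$. Iterating, for each $j$ I associate to every $(k-j)$-tuple $x_{j}<\cdots<x_{k-1}$ an element $D_{j}(x_j,\ldots,x_{k-1}) \in \P^{j+1}[n]$, defined as the collection of all $D_{j-1}(x_{j-1},x_j,\ldots,x_{k-1})$ ranging over $x_{j-1}<x_j$; by Definition~\ref{def:highOrder} this is indeed a down-set in $\P^{j}[n]$ and hence lies in $\P^{j+1}[n]$. After $k-1$ peeling steps each single vertex $v$ receives an element $D(v) \in \P^{k}[n]$.

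The heart of the argument is showing this map is injective, which I would prove by establishing, at every level, the monotonicity property analogous to the single line ``$C(uv) \not\preceq C(tu)$'' in Lemma~\ref{lemma:N3upper}. Suppose two tuples sharing $k-2$ common vertices receive equal labels at some level, say $D_j(x_{j},\ldots,x_{k-1}) = D_j(x_{j+1},\ldots,x_k)$ for an appropriate shifted tuple. As in the proofs already given, membership of the ``top'' base label in one set forces, via the down-set/containment structure of $\P^{j+1}[n]$, the existence of an even smaller prepended vertex $x<x_{j}$ with a containment $C \subseteq C(x,x_j,\ldots)$; but the path ending at the common edge in the color of the newly formed $k$-edge can be extended by one, contradicting the containment at the $\P^2[n]$ level. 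The cleanest way to package this is by downward induction on the order $j$, reducing the claim ``$D$ is injective on vertices'' to ``$C$ strictly increases along any would-be path extension,'' which is precisely the contradiction already extracted in the $3$-uniform case.

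The main obstacle I anticipate is bookkeeping rather than conceptual: correctly threading the recursion so that at each stage the generated collection really is a down-set in the previous order (so that Definition~\ref{def:highOrder} applies and the count $\rho_k(n)$ is legitimate), and formulating the inductive invariant precisely enough that the final ``extend-the-path'' contradiction propagates all the way up from $\P^2[n]$ to $\P^k[n]$. In practice I expect the authors will short-circuit most of this by invoking the already-proven bound~(\ref{strongerupper}), $N_k(2,n) \le N_{k-2}(\rho_{k-1}(n),2)$-style, together with Observation~\ref{obs:down-sets}'s generalization, so that the lemma reduces to a one-line induction on $k$ mirroring the derivation of Theorem~\ref{theo:transform}; the genuinely new content is merely verifying that the colors (down-sets) produced at each step are counted by $\rho_k(n)$, after which the pigeonhole principle finishes the proof.
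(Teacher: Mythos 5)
Your proposal follows essentially the same route as the paper: the same recursively defined labels $D(x_{k-r},\ldots,x_{k-1})$ (downward closures, under $\subseteq$, of the labels obtained by prepending a smaller vertex), each landing in $\P^{k-r+1}[n]$, with injectivity of $v\mapsto D(v)$ proved by propagating a containment down through the orders until the path-extension contradiction is reached at the $[n]^2$ level. Two small remarks: at every stage (not just the first) one must take the \emph{downward closure} of the collection of prepended labels rather than the bare collection for the result to lie in $\P^{j+1}[n]$, which is how the paper defines $D$; and the paper does not shortcut via~(\ref{strongerupper}) as you guessed it might, but carries out the full recursive construction directly.
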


\begin{proof}
Fix a $2$-coloring of the edges of $K^k_N$ that has no monochromatic monotone path of length $n$. 
We need to show that $N \leq \rho_k(n)$.
We begin with some definitions.
For every $k-1$ vertices $x_1<\cdots<x_{k-1}$ denote $D(x_1,\ldots,x_{k-1}):=(1+n_b,1+n_w)$ where $n_b$ is the length of the longest black monotone path ending with $\{x_1,\ldots,x_{k-1}\}$, and $n_w$ is defined in a similar way only with respect to white monotone paths.
Notice that $D(x_1,\ldots,x_{k-1})\in\P^2[n]$.
For any $r \in \{k-2,k-1,\ldots,1\}$ and any $r$ vertices $x_{k-r}<\cdots<x_{k-1}$
we recursively define
$$
D(x_{k-r},\ldots,x_{k-1})=\{S\in\P^{k-r}[n] : S\subseteq D(x,x_{k-r},\ldots,x_{k-1}) \text{ for some } x<x_{k-r}\} \;.
$$
Since for $r=k-1$ we have $D(x_{k-r},\ldots,x_{k-1})\in\P^2[n]$, from  Definition~\ref{def:highOrder} we immediately have that $D(x_{k-r},\ldots,x_{k-1})\in\P^{k-r+1}[n]$.
In particular, it holds that $D(v)\in\P^{k}[n]$.
Hence to complete the proof it suffices to show that $D(u) \neq D(v)$ for every pair of vertices.

We first prove that for any $2\le r\le k-1$ and any $r$ vertices $x_{k-r}<\cdots<x_{k-1}$,
\begin{equation}\label{eq:recursiveD}
D(\x,x_{k-1})\subseteq D(x_{k-r},\x) \quad\Rightarrow\quad
\exists x<x_{k-r} \mbox{ s.t. }
D(x_{k-r},\x,x_{k-1})\subseteq D(x,x_{k-r},\x) \;,
\end{equation}
where $\x$ is short for $x_{k-r+1},\ldots,x_{k-2}$.
By definition, $D(x_{k-r},\x,x_{k-1})\in D(\x,x_{k-1})$.
Therefore, the assumption that $D(\x,x_{k-1})\subseteq D(x_{k-r},\x)$ implies $D(x_{k-r},\x,x_{k-1})\in D(x_{k-r},\x)$. Hence, (again, by definition) there is some vertex $x<x_{k-r}$ such that $D(x_{k-r},\x,x_{k-1})\subseteq D(x,x_{k-r},\x)$, as desired. 

By iteratively applying (\ref{eq:recursiveD}) we deduce the following.
Suppose there are two vertices $u<v$ such that $D(u)=D(v)$. Note that the case $r=2$ of (\ref{eq:recursiveD}) is satisfied, namely, $D(v)\subseteq D(u)$, where we put $x_{k-1}=v,x_{k-2}=u$. Therefore, the conclusion in (\ref{eq:recursiveD}) for the case $r=k-1$ must hold as well. That is, there are $k$ vertices $x_{0}<\cdots<x_{k-3}<u<v$ such that
$D(x_{1},\ldots,u,v) \subseteq D(x_{0},\ldots,u)$ ($\in[n]^2$).
However, the longest monochromatic monotone path ending at $\{x_{0},\ldots,u\}$ that has the same color as the edge $\{x_{0},\ldots,u,v\}$ can be extended to a longer one ending at $\{x_{1},\ldots,u,v\}$, implying $D(x_{1},\ldots,u,v) \not \subseteq D(x_{0},\ldots,u)$---a contradiction.
This completes the proof.
\end{proof}

Before giving the lower bound on $N_k(2,n)$ matching our upper bound, we first give a short proof for the case of graphs, that is, for the fact that $N_2(2,n)\geq n^2+1$, which would be suggestive of the generalization we plan to make. In other words, we give a black/white coloring of the complete graph on $n^2$ vertices that has no monochromatic monotone path of length $n$.
First, identify each of the $n^2$ vertices with a distinct pair of integers $(x,y)\in[n]^2$, 
and order the pairs (i.e., vertices) lexicographically.
For an edge whose vertices are identified with the pairs $(x_1,y_1)$ and $(x_2,y_2)$, where $(x_1,y_1)$ is lexicographically smaller than $(x_2,y_2)$,
color it black if $x_1<x_2$; otherwise, color it white if $y_1<y_2$.
Crucially, observe that the lexicographic ordering ensures that every edge is indeed colored by either black or white.
Under this coloring, it is clear that any monochromatic monotone path is of length at most $n-1$, or equivalently, has at most $n$ vertices, simply because the pairs along the path are strictly increasing either in the first or in the second coordinate.

Let us give the necessary definitions we will need in order to prove the lower bound on $N_k(2,n)$ stated in Lemma~\ref{lemma:Nklower} below.
We begin by extending the lexicographic ordering of pairs of nonnegative integers (which are line partitions of order $2$) to partitions of arbitrary order.
Assuming we have already defined lexicographic ordering for line partitions of order $k-1$,
we say that an order-$k$ line partition $\F$ is lexicographically smaller than an order-$k$ line partition $\F'$, denoted $\F\lessdot\F'$, if the lexicographically first element on which they differ (that is, the lexicographically first member of the symmetric difference $\F\triangle\F'$) is in $\F'$.\footnote{The reader may find it useful here to think of an order-$k$ line partition as a $0/1$-vector indexed by order-$(k-1)$ line partitions that are ordered lexicographically.
Now, $\F$ is lexicographically smaller than $\F'$ precisely when $0=\F_S<\F'_S=1$ where $S$ is the first index in which the two vectors differ.}
(This ordering in fact coincides with the one preceding the proof of Lemma~\ref{lemma:N32lower} that we used for the $k=3$ case.)


Furthermore, when two order-$k$ line partition $\F,\F'$ satisfy $\F\nsupseteq\F'$ we denote $\delta(\F,\F')$ the lexicographically first element that $\F'$ contains and $\F$ does not (that is, the lexicographically first member of $\F'\setminus\F$).\footnote{In the vector terminology mentioned earlier, $\delta(\F,\F')$ is the first index $S$ such that $0=\F_S<\F'_S=1$.}
Note that $\delta(\F,\F')$ is a line partition of order $k-1$; that is, if $\F,\F'\in\P^k[n]$ then $\delta(\F,\F')\in\P^{k-1}[n]$.

\begin{claim}\label{claim:threePartitions}
For any sequence of order-$k$ line partitions $\F_1\nsupseteq\F_2\nsupseteq\cdots\nsupseteq\F_t$ we have
$\delta(\F_1,\F_2)\nsupseteq\delta(\F_2,\F_3)\nsupseteq\cdots\nsupseteq\delta(\F_{t-1},\F_t)$.
\end{claim}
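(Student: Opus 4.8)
The plan is to prove Claim~\ref{claim:threePartitions} by reducing the general statement to its simplest instance, namely a sequence of length $t=3$. It suffices to show that for any three order-$k$ line partitions with $\F_1\nsupseteq\F_2\nsupseteq\F_3$ we have $\delta(\F_1,\F_2)\nsupseteq\delta(\F_2,\F_3)$; the general chain then follows by applying this to each consecutive triple $\F_{i-1},\F_i,\F_{i+1}$ along the sequence. So I would fix such a triple and set $S_1=\delta(\F_1,\F_2)$ and $S_2=\delta(\F_2,\F_3)$, recalling that these are order-$(k-1)$ line partitions. My goal is to exhibit some $S' \subseteq S_2$ with $S'\notin S_1$, which is exactly what $S_1 \nsupseteq S_2$ means.

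The key observation I would try to leverage is the precise meaning of $\delta$ in terms of the lexicographic order on order-$(k-1)$ line partitions. By definition $S_1$ is the lexicographically first member of $\F_2\setminus\F_1$, so in particular \emph{every} order-$(k-1)$ line partition that is lexicographically smaller than $S_1$ is either in both $\F_1,\F_2$ or in neither---the two partitions agree below $S_1$. Similarly $S_2$ is the lexicographically first member of $\F_3\setminus\F_2$, so $\F_2,\F_3$ agree below $S_2$. The natural case split is on the relative lexicographic position of $S_1$ and $S_2$. First I would handle the case $S_2 \lessdot S_1$: here $S_2$ is lexicographically smaller than $S_1$, so $\F_1$ and $\F_2$ agree at $S_2$; since $S_2\in\F_3\setminus\F_2$ gives $S_2\notin\F_2$, we get $S_2\notin\F_1$ as well, hence $S_2\notin\F_1$. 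Because $S_1\in\F_2$ is a down-set element and $S_1$ sits lexicographically above $S_2$, I expect to deduce $S_2\not\subseteq S_1$ directly (a subset is lexicographically no larger), which already yields $S_1\nsupseteq S_2$ trivially from $S_2\notin S_1$ combined with $S_2\subseteq S_2$.

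The more delicate case is $S_1 \preceq S_2$ in the lexicographic sense, i.e.\ $S_1\lessdot S_2$ or $S_1=S_2$. Here I would argue as follows. Since $S_1=\delta(\F_1,\F_2)$ we have $S_1\in\F_2$; since $\F_2$ is a down-set, all $S'\subseteq S_1$ lie in $\F_2$. Now consider $S_2\in\F_3\setminus\F_2$, so $S_2\notin\F_2$, and since $\F_2$ is a down-set this forces $S_2\not\subseteq S_1$ (otherwise $S_2$ would be in $\F_2$). This gives a candidate witness: there is a coordinate/element where $S_2$ exceeds $S_1$, and I would use it to locate some $S'\subseteq S_2$ with $S'\notin S_1$. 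The cleanest route is to note $S_2\not\subseteq S_1$ is itself \emph{literally} the statement $S_1\nsupseteq S_2$, so once I establish $S_2\notin\F_2\supseteq\{\text{down-set of }S_1\}$ the claim falls out. The main obstacle I anticipate is making the lexicographic bookkeeping airtight---specifically verifying that ``$\F_1,\F_2$ agree strictly below $S_1$'' interacts correctly with the down-set property so that no edge case (such as $S_1=S_2$, forcing me to rule out equality via $S_2\notin\F_2$ but $S_1\in\F_2$) is overlooked. I would therefore organize the final write-up around the single clean implication $S_2\notin\F_2$ while $S_1\in\F_2$ and $\F_2$ a down-set, which immediately gives $S_2\not\subseteq S_1$, i.e.\ $\delta(\F_1,\F_2)\nsupseteq\delta(\F_2,\F_3)$, and then invoke transitivity of the resulting relation along the whole sequence.
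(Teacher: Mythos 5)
Your final argument is exactly the paper's proof: $\delta(\F_1,\F_2)\in\F_2$ while $\delta(\F_2,\F_3)\notin\F_2$, and since $\F_2$ is a down-set (closed under taking subsets) this rules out $\delta(\F_2,\F_3)\subseteq\delta(\F_1,\F_2)$; the reduction to consecutive triples is likewise identical. The lexicographic case split you entertain along the way is unnecessary (and its first branch, deducing $S_2\not\subseteq S_1$ from $S_2\lessdot S_1$, does not work as stated, since subsets are lexicographically smaller), but you correctly discard it in favor of the single down-set implication, so the proof stands.
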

\begin{proof}
It suffices to show that for any three order-$k$ line partitions $\F_1\nsupseteq\F_2\nsupseteq\F_3$ we have $\delta(\F_1,\F_2)\nsupseteq\delta(\F_2,\F_3)$.
By definition, $\delta(\F_1,\F_2)\in\F_2$ and $\delta(\F_2,\F_3)\notin\F_2$. Since $\F_2$ is closed under taking subsets, it cannot be the case that  $\delta(\F_2,\F_3)\subseteq\delta(\F_1,\F_2)$.
\end{proof}

\begin{lemma}\label{lemma:Nklower} 
$N_k(2,n)>\rho_k(n)$.
\end{lemma}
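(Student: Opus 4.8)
The plan is to mirror the lower-bound construction we already used for $k=3$ in Lemma~\ref{lemma:N32lower}, but now one level up in the hierarchy of higher-order line partitions. Specifically, I would set $N=\rho_k(n)$, identify each vertex of $K^k_N$ with a distinct order-$k$ line partition $\F\in\P^k[n]$, and order the vertices by the lexicographic order $\lessdot$ defined just above. The goal is to two-color the edges so that no monochromatic monotone path has length $n$. The natural coloring, in analogy with the $k=3$ case, is: given an edge whose $k$ vertices are the partitions $\F_1\lessdot\F_2\lessdot\cdots\lessdot\F_k$, look at the two ``difference witnesses'' $\delta(\F_1,\F_2)$ and $\delta(\F_{k-1},\F_k)$ (or more precisely the consecutive $\delta$'s along the edge) and color black if the $\delta$-sequence is in some sense strictly increasing and white otherwise. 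Here I would lean on Claim~\ref{claim:threePartitions}, which tells me that along any $\lessdot$-increasing sequence of partitions that are pairwise $\nsupseteq$, the sequence of $\delta$-witnesses is itself a $\nsupseteq$-chain of order-$(k-1)$ partitions — exactly the setup needed to recurse.

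The key point is that the $\delta$-map sends a monotone path in $K^k_N$ (with vertices that are order-$k$ partitions) to a monotone-path-like structure in $K^{k-1}$ on order-$(k-1)$ partitions. Concretely, I would prove the statement by induction on $k$, with Lemma~\ref{lemma:N32lower} (equivalently the graph base case $N_2(2,n)>n^2$ sketched before Claim~\ref{claim:threePartitions}) serving as the base. The inductive step is to show that if the edges of $K^k_N$ are colored as above and we fix a long monochromatic monotone path $\F_1\lessdot\cdots\lessdot\F_{\ell+k-1}$, then applying $\delta$ to consecutive pairs produces an order-$(k-1)$ sequence $\delta(\F_1,\F_2)\nsupseteq\delta(\F_2,\F_3)\nsupseteq\cdots$, and moreover the induced coloring on these order-$(k-1)$ partitions is consistent with the coloring one would use at level $k-1$. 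Since by the induction hypothesis no monochromatic monotone path of length $n$ exists at level $k-1$, the original path cannot be too long either. The cleanest way to package this is probably, as in Lemma~\ref{lemma:N32lower}, to carry an explicit invariant along the path by induction on the length $\ell$: for a black path of length $\ell$ ending in the last two vertices $\F,\F'$, some quantity associated with $\delta(\F,\F')$ exceeds $\ell$, and symmetrically for white; since these quantities are bounded by $n$, the path length is bounded.

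The main obstacle, and the place I would spend the most care, is pinning down exactly what the two-valued invariant should be at each order, and verifying that the coloring rule genuinely forces one of the two monotonicity phenomena at every step. In the $k=3$ case the dichotomy was clean: either $\delta$ strictly increases (black) or the $\delta$th coordinate of the partition strictly increases (white), and each can happen at most $n$ times. For general $k$ I expect the white case to again reduce to ``the value at the $\delta$-coordinate goes up,'' using that the partitions are down-closed (so $\delta(\F,\F')\in\F'$ but $\notin\F$), while the black case reduces via Claim~\ref{claim:threePartitions} to strict $\lessdot$-increase of the $\delta$-witnesses, which is precisely a length bound coming from the level-$(k-1)$ induction. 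The delicate bookkeeping is ensuring that ``black'' at level $k$ corresponds to a monochromatic path of matching color at level $k-1$ so that the induction hypothesis $N_{k-1}(2,n)>\rho_{k-1}(n)$ can be invoked with the right parameters, and that the base of the two-dimensional recursion ($\P^2[n]=[n]^2$) is handled by the coordinatewise argument already given for $N_2(2,n)>n^2$. Once the invariant is correctly stated, the inductive verification should be a short computation of the same flavor as the one closing Lemma~\ref{lemma:N32lower}.
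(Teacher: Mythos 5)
Your setup (identifying the vertices with $\P^k[n]$ under the lexicographic order, using Claim~\ref{claim:threePartitions} to pass from a $\lessdot$-chain to a $\nsupseteq$-chain of $\delta$-witnesses, and grounding everything in the coordinatewise argument on $[n]^2$) is exactly the paper's, but the proposal stops short of the one step that actually constitutes the proof: the coloring is never defined. You write ``color black if the $\delta$-sequence is in some sense strictly increasing and white otherwise,'' and later flag ``pinning down the two-valued invariant at each order'' as the main obstacle --- but for order-$j$ partitions with $j\geq 3$ the relation $\mathcal{G}\nsupseteq\mathcal{G}'$ does \emph{not} split into two usable cases, so there is no level-by-level dichotomy to be found. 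The resolution in the paper is to iterate $\delta$ a full $k-2$ times \emph{within a single edge}: each window of $k-1$ consecutive vertices collapses to a single pair $\delta^*(\F_i,\ldots,\F_{i+k-2})\in[n]^2$, and for the two windows of an edge one has $(x_1,y_1)\nsupseteq(x_2,y_2)$, which in $[n]^2$ (and only there) means ``$x_1<x_2$ or $y_1<y_2$'' --- that is the two-coloring. Along a monochromatic path of length $\ell$ the $\ell+1$ resulting pairs then strictly increase in a fixed coordinate, forcing $\ell+1\leq n$. No induction on $k$ is needed.

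The inductive packaging you propose instead --- map the path to level $k-1$ via $\delta$ and invoke $N_{k-1}(2,n)>\rho_{k-1}(n)$ --- has a genuine flaw. The sequence $\delta(\F_1,\F_2),\delta(\F_2,\F_3),\ldots$ is only \emph{consecutively} related by $\nsupseteq$: these order-$(k-1)$ partitions need not be lexicographically increasing, and since $\nsupseteq$ is not transitive, non-consecutive terms may even coincide. Hence they do not form a monotone path in the lex-ordered $K^{k-1}$ on $\P^{k-1}[n]$, and the induction hypothesis cannot be applied to them. Relatedly, your proposed invariant ``a quantity associated with $\delta(\F,\F')$ of the last two vertices exceeds $\ell$'' carries the wrong state for $k\geq 4$: the color of an edge depends on all $k$ of its vertices, so any invariant must be attached to $\delta^*$ of the last $k-1$ vertices, not to the last two. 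These are fixable, but fixing them amounts to replacing your recursion across uniformities by the paper's single collapse to $[n]^2$.
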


\begin{proof}
Put $N=\rho_k(n)$, and identify the vertex set of $K^k_N$ with $\P^{k}[n]$, ordered lexicographically.
We need to color the edges so that there is no monochromatic monotone path of length $n$.
Let $e=\{\F_1,\ldots,\F_k\}$ be an edge, and note that since the $\F_i$'s are lexicographically ordered $\F_1\lessdot\cdots\lessdot\F_k$ it follows that $\F_1\not\supseteq\cdots\not\supseteq\F_k$.
By Claim~\ref{claim:threePartitions} we have $\delta(\F_1,\F_2)\nsupseteq\cdots\nsupseteq\delta(\F_{k-1},\F_k)$, and observe that we may apply Claim~\ref{claim:threePartitions} again, this time on the sequence of $\delta$'s, which is a sequence of $k-1$ line partitions of order $k-1$. By applying Claim~\ref{claim:threePartitions} $i$ times in a similar fashion we obtain a sequence of $k-i$ line partitions in $\P^{k-i}[n]$. In particular, after $i=k-2$ applications we obtain two pairs $(x_1,y_1)=:\delta^*(\F_1,\ldots,\F_{k-1})$ and $(x_2,y_2)=:\delta^*(\F_2,\ldots,\F_{k})$ that belong to $\P^{2}[n]$~($=[n]^2$) and satisfy $(x_1,y_1)\not\supseteq(x_2,y_2)$.
We color the edge $e$ black if $x_1<x_2$; otherwise, we necessarily have $y_1<y_2$, and we color $e$ white.

Observe that a monochromatic monotone path of length $\ell$ determines a sequence of $\ell+1$ pairs from $[n]^2$
(namely, if the path is on the vertices $\F_1\lessdot\cdots\lessdot\F_{\ell+k-1}$ then it determines the $\ell+1$ pairs $\delta^*(\F_1,\ldots,\F_{k-1}),\delta^*(\F_2,\ldots,\F_k),\ldots,\delta^*(\F_{\ell+1},\ldots,\F_{\ell+k-1})$). Moreover, these pairs strictly increase either in the first or in the second coordinate. We deduce that $\ell+1 \leq n$, completing the proof.
\end{proof}

Having completed the proof of Theorem~\ref{theo:Nk2exact} we finally mention that all the above can be extended in a straightforward manner to any number $q>2$ of colors, so as to determine $N_k(q,n)$.
Setting $\P^2_d[n]=[n]^d$, we define $\P^k_d[n]$ so that its members are the down-sets in $\P^{k-1}_d[n]$, similarly to what we did before.
Denoting $\rho_{k,d}(n)=\size{\P^k_d[n]}$, we of course have
$\rho_{3,d}(n) = P_{d-1}(n)$,
and since for any $k\geq 3$,
$\rho_{k,d}(n)\leq 2^{\rho_{k-1,d}(n)}$,
we have $\rho_{k,d}(n) \leq t_{k-2}(P_{d-1}(n))$.
By following the proofs in this section essentially line by line, and replacing
$[n]^2$ with $[n]^q$, one obtains the characterization
\begin{equation}\label{eq:general}
N_k(q,n)= \rho_{k,q}(n) + 1 \;.
\end{equation}


We now use the above characterization to deduce a recursive lower bound on $N_{k}(q,n)$.

\begin{lemma}\label{lemma:NkReclower}
For every $k \geq 4$, $q\geq 2$, and $n \geq 2$ we have
$$N_k(q,n) \geq 2^{N_{k-1}(q,n)/N_{k-2}(q,n)} \;.$$
\end{lemma}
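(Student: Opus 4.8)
The plan is to run the same ``single large antichain'' strategy used in the proof of Theorem~\ref{theo:lower}, but one level higher in the hierarchy of the $\P^k_q[n]$'s. By the characterization (\ref{eq:general}) we have $N_j(q,n)=\rho_{j,q}(n)+1$ for every $j$, so proving the lemma amounts to giving a lower bound on $\rho_{k,q}(n)=\size{\P^{k}_q[n]}$, which counts the down-sets of the poset $\P^{k-1}_q[n]$ (ordered by inclusion). Exactly as in Observation~\ref{obs:antichains}, the down-sets of a finite poset are in bijection with its antichains, so it suffices to exhibit one large antichain $\mathcal{A}\subseteq\P^{k-1}_q[n]$: every subset of $\mathcal{A}$ is again an antichain, and distinct subsets are distinct antichains, whence $\rho_{k,q}(n)\ge 2^{\size{\mathcal{A}}}$.

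To build such an $\mathcal{A}$, recall that for $k\ge 4$ the members of $\P^{k-1}_q[n]$ are precisely the down-sets of $\P^{k-2}_q[n]$. The key observation is that two distinct down-sets of equal cardinality are incomparable under inclusion (if $S\subsetneq S'$ then $\size{S}<\size{S'}$); hence, for each fixed $m$, the family of all down-sets of $\P^{k-2}_q[n]$ of cardinality exactly $m$ is an antichain in $\P^{k-1}_q[n]$. This plays the role of the fixed-weight ``middle layer'' in Theorem~\ref{theo:lower}. I would then partition all $\rho_{k-1,q}(n)$ down-sets of $\P^{k-2}_q[n]$ according to their cardinality; since such a down-set has cardinality between $0$ and $\size{\P^{k-2}_q[n]}=\rho_{k-2,q}(n)$, there are only $\rho_{k-2,q}(n)+1$ classes, and by the pigeonhole principle some class (some antichain $\mathcal{A}$) has size at least $\rho_{k-1,q}(n)/(\rho_{k-2,q}(n)+1)$.

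Combining the two steps gives $\rho_{k,q}(n)\ge 2^{\rho_{k-1,q}(n)/(\rho_{k-2,q}(n)+1)}$, and substituting $\rho_{k-1,q}(n)=N_{k-1}(q,n)-1$ and $\rho_{k-2,q}(n)+1=N_{k-2}(q,n)$ from (\ref{eq:general}) delivers a bound of the claimed shape $N_k(q,n)=\rho_{k,q}(n)+1\ge 2^{N_{k-1}(q,n)/N_{k-2}(q,n)}$. The only substantive step—and the one where I would expect to spend the most effort—is this middle-layer/pigeonhole estimate producing a large antichain; everything else is a mechanical transcription of Theorem~\ref{theo:lower} together with the characterization (\ref{eq:general}). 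The sole point requiring a little care is landing on exactly the exponent $N_{k-1}/N_{k-2}$ rather than the marginally weaker $(N_{k-1}-1)/N_{k-2}$ that the crude pigeonhole yields; this is routine bookkeeping (using the integrality of the layer size and the fact that the extreme cardinality classes are singletons) and does not affect the structure of the argument.
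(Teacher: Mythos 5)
Your proposal is correct and is essentially identical to the paper's proof: the paper likewise takes the largest equal-cardinality layer of $\P^{k-1}_q[n]$, notes that each of its $2^{L_i}$ subsets generates a distinct down-set (equivalently, is a distinct antichain), and applies the pigeonhole bound $L_i\ge\rho_{k-1,q}(n)/(\rho_{k-2,q}(n)+1)$, observing as you do that strictness/integrality upgrades the exponent to $N_{k-1}/N_{k-2}$. The only cosmetic difference is that you route the first step through the down-set/antichain bijection while the paper constructs the down-sets directly.
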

\begin{proof}
We show that for every $k\geq 4$, $d\geq 2$, and $n\geq 2$, we have
$\rho_{k,d}(n)\geq 2^{(\rho_{k-1,d}(n)+1)/(\rho_{k-2,d}(n)+1)}$, from which the proof immediately follows using (\ref{eq:general}).
Put $U=\rho_{k-2,d}(n)$.
Let $L_i$ be the number of sets $A\in\P^{k-1}_d[n]$ of cardinality $i$, for some $0\le i\le U$.
Observe that any collection of such sets $A$, together with all $A'\subseteq A$, determines a distinct down-set; indeed, any such down-set has a unique set of maximal elements. It therefore follows that $\rho_{k,d}(n) \geq 2^{L_i}$.
Now, take $i$ so as to maximize $L_i$.
Since $\rho_{k-1,d}(n) = \sum_{j=0}^{U} L_j$, we have $L_i\geq \rho_{k-1,d}(n)/(U+1)$, and in fact, this inequality is clearly strict. The desired inequality now follows.
\end{proof}


We will show that Theorem~\ref{theo:transformlower}
follows by iteratively applying Lemma~\ref{lemma:NkReclower}.
First, we need a simple lemma concerning differences of towers.
We henceforth write $\log()$ for $\log_2()$.

\begin{lemma}\label{lemma:towerDiff}
For any $k\geq 2$ and positive reals $a\geq b+1$, $a\geq 3$, we have  $t_k(a)-t_k(b) \geq t_k(a - 2^{-(k-2)})$.
\end{lemma}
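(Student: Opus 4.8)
The plan is to first strip the statement down to a single clean inequality by monotonicity, then reduce that to two numerical conditions via the recursive structure $t_k=2^{t_{k-1}}$, and finally do the real work in a short auxiliary induction. Since $t_k$ is increasing and $a\ge b+1$ gives $b\le a-1$, we have $t_k(b)\le t_k(a-1)$, so it suffices to prove the sharpened inequality $t_k(a)-t_k(a-1)\ge t_k\!\left(a-2^{-(k-2)}\right)$. The base case $k=2$ is immediate: here $2^{-(k-2)}=1$ and $t_2(a)-t_2(a-1)=2^a-2^{a-1}=2^{a-1}=t_2(a-1)$, with equality. For $k\ge 3$ write $h:=2^{-(k-2)}$ and set $\Delta:=t_{k-1}(a)-t_{k-1}(a-1)$ and $P:=t_{k-1}(a)-t_{k-1}(a-h)$. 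Dividing the target inequality through by $2^{t_{k-1}(a)}$ turns it into $1-2^{-\Delta}\ge 2^{-P}$, i.e.\ $2^{-\Delta}+2^{-P}\le 1$; hence it is enough to show $\Delta\ge 1$ and $P\ge 1$, which together give $2^{-\Delta}+2^{-P}\le\tfrac12+\tfrac12=1$.

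The bound $\Delta\ge 1$ is routine: one checks by induction on $m$ that $t_m(x)-t_m(x-1)\ge 1$ for $x\ge 2$, the step following from $2^{t_{m-1}(x)}-2^{t_{m-1}(x-1)}\ge 2^{t_{m-1}(x-1)}$ once $t_{m-1}(x)-t_{m-1}(x-1)\ge 1$. All the content is in proving $P\ge 1$. To this end I would fix the shift $h=2^{-(k-2)}$ and track the sequence $P_m:=t_m(a)-t_m(a-h)$ for $1\le m\le k-1$, so that $P=P_{k-1}$. Here $P_1=h=2^{-(k-2)}$, and the recursion $t_m=2^{t_{m-1}}$ yields $P_m=2^{t_{m-1}(a)}\bigl(1-2^{-P_{m-1}}\bigr)$.

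Now I would use the elementary inequality $1-2^{-x}\ge x/2$ valid for $x\in[0,1]$ (the concave function $1-2^{-x}$ lies above its chord on $[0,1]$), together with $t_m(a)\ge t_m(3)\ge 3$ for $a\ge 3$. As long as $P_{m-1}\le 1$ this gives $P_m\ge 2^{t_{m-1}(a)-1}P_{m-1}\ge 4\,P_{m-1}$, while if ever $P_{m-1}\ge 1$ then $1-2^{-P_{m-1}}\ge\tfrac12$ forces $P_m\ge 2^{t_{m-1}(a)-1}\ge 4\ge 1$ and the sequence stays above $1$. Consequently $P_m\ge\min\{1,\,4^{\,m-1}2^{-(k-2)}\}=\min\{1,\,2^{\,2m-k}\}$ for all $m$, and at $m=k-1$ the exponent equals $2(k-1)-k=k-2\ge 1$, so $P=P_{k-1}\ge 1$, completing the argument. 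The main obstacle, and the point the proof must handle carefully, is precisely this competition between the shrinking shift and the growing tower: $P_1=2^{-(k-2)}$ is tiny, but each of the $k-2$ available levels multiplies it by at least $4$, and $4^{\,k-2}2^{-(k-2)}=2^{\,k-2}\ge 1$ — the two effects balance exactly, which explains why the admissible constant in the exponent is $2^{-(k-2)}$ and could not be taken much larger without breaking the induction.
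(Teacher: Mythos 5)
Your proof is correct, and it takes a genuinely different route from the paper's. The paper peels the tower from the \emph{top}: it writes $t_k(a)=t_{k-1}(2^a)$, inducts on $k$ with the lemma itself as the induction hypothesis (applied at the arguments $2^a,2^b$), and finishes each step with the logarithm estimate $\log(x-y)\ge \log x-3y/x$ derived from $1-p\ge e^{-2p}$; the loss $2^{-(k-2)}$ accumulates there as a geometric series of correction terms $3\cdot 2^{-(k-3)}/2^a\le \tfrac12\cdot 2^{-(k-3)}$. You instead peel the tower from the \emph{bottom}: after sharpening $b$ to $a-1$ you divide by $2^{t_{k-1}(a)}$ to reduce the claim to $2^{-\Delta}+2^{-P}\le 1$, dispose of $\Delta\ge 1$ by a routine auxiliary induction, and then track the gap $P_m=t_m(a)-t_m(a-h)$ up the tower, showing via $1-2^{-x}\ge x/2$ on $[0,1]$ and $t_{m-1}(a)\ge 3$ that it at least quadruples at each of the $k-2$ levels until it reaches $1$, which is exactly what $4^{k-2}2^{-(k-2)}=2^{k-2}\ge 1$ delivers. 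All the individual steps check out (the recursion $P_m=2^{t_{m-1}(a)}(1-2^{-P_{m-1}})$, the chord bound for the concave function $1-2^{-x}$, and the case split on whether $P_{m-1}$ has reached $1$). What your approach buys is that it is entirely elementary --- no iterated logarithms --- and it makes transparent why the shift $2^{-(k-2)}$ is the right order of magnitude, since the doubling-per-level exactly cancels the initial halving-per-level; what the paper's approach buys is a shorter write-up, since the induction hypothesis is literally the statement of the lemma and no auxiliary sequence needs to be introduced.
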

\begin{proof}
We proceed by induction on $k$. For the base case $k=2$ we have $2^a-2^b \geq 2^a-2^{a-1}=2^{a-1}$, as needed.
Now, observe that it follows from the standard estimate $1-p\geq e^{-2p}$, applicable for any $0\leq p\leq \frac12$, that for every pair of positive reals $x\geq 2y$ we have $x-y=x(1-y/x)\geq xe^{-2y/x}$.
So for every $x\geq 2y>0$ we have
\begin{equation}\label{eq:logs}
\log (x-y) \geq \log x - 2\log(e)y/x \geq \log x - 3y/x \;.
\end{equation}
Denote by $\log^{(i)}()$ the $i\geq 1$ times iterated $\log()$ (so, e.g., $\log^{(2)}(x)=\log\log(x)$).
For the induction step, which is equivalent to $\log^{(k-1)}(t_k(a)-t_k(b))\geq a - 2^{-(k-2)}$, we have
\begin{eqnarray*}
\log^{(k-1)}(t_k(a)-t_k(b)) &=& \log\log^{(k-2)}(t_{k-1}(2^a)-t_{k-1}(2^{b}))\\
&\geq& \log(2^a - 2^{-(k-3)})\\
&\geq& a - 3\cdot 2^{-(k-3)}/2^a\\
&\geq& a - \frac12\cdot 2^{-(k-3)} \;,
\end{eqnarray*}
where the first inequality follows from the induction hypothesis, the second inequality from (\ref{eq:logs}), and the third inequality from the assumption that $a\geq 3$.
The proof follows.
\end{proof}

\begin{proof}[Proof of Theorem~\ref{theo:transformlower}]
The case $k=4$ follows immediately from Theorem~\ref{theo:transformlower} and the fact that $N_2(q,n)\leq n^q+1$ (see the discussion preceding (\ref{eq:PathEST})).
We prove by induction on $k\geq 5$ that $N_k(q,n)\geq t_{k-2}(\frac{N_3(q,n)}{2n^q} - \sum_{i=0}^{k-6}2^{-i})$, from which the result clearly follows.
We assume throughout the proof that $n$ is larger than some absolute constant (i.e., independent of $q,k$).
For the base case $k=5$ we get
\begin{eqnarray*}
\log N_5(q,n) &\geq& N_4(q,n)/N_3(q,n)\\
&\geq& 2^{N_3(q,n)/(n^q+1)}/N_3(q,n)\\
&\geq& 2^{N_3(q,n)/2n^q} \;,
\end{eqnarray*}
where the first and second inequalities rely on Lemma \ref{lemma:NkReclower}, and the last inequality is due to the fact that 
\begin{equation}\label{eq:N3overN2}
\log_2 N_3(q,n) = o_n(N_3(q,n)/n^q)
\end{equation}
uniformly for all $n$ larger than some absolute constant (i.e., independently of $q$; this easily follows from the bounds in Corollary~\ref{coro:bound}).
%
For the induction step, assuming $k\geq 6$, we have
\begin{eqnarray*}
\log N_k(q,n) &\geq& N_{k-1}(q,n)/N_{k-2}(q,n)\\
&\geq& N_{k-1}(q,n)/t_{k-4}(N_3(q,n))\\
&\geq& 2^{t_{k-4}(\frac{N_3(q,n)}{2n^q}-\sum_{i=0}^{k-7}2^{-i}) - t_{k-4}(\log N_3(q,n))}\\
&\geq& 2^{t_{k-4}(\frac{N_3(q,n)}{2n^q}-\sum_{i=0}^{k-7}2^{-i} - 2^{-(k-6)})}\\
&=& t_{k-3}\Big(\frac{N_3(q,n)}{2n^q}-\sum_{i=0}^{k-6}2^{-i}\Big)\;,
\end{eqnarray*}
where the first inequality follows from Lemma~\ref{lemma:NkReclower}, the second inequality from the upper bound $N_k(q,n) \leq t_{k-2}(N_3(q,n))$ in Theorem~\ref{theo:transform}, the third inequality from the induction hypothesis,
and the fourth inequality from Lemma~\ref{lemma:towerDiff} using the fact that $k-4\geq 2$, as well as (\ref{eq:N3overN2}). This completes the proof.
\end{proof}

\section{Concluding Remarks and Open Problems}\label{sec:conclude}


\paragraph{An exact bound for the two-edge path?:} Combining Theorem~\ref{theo:main} together with Observation~\ref{obs:antichains} we see that $N_3(q,n)$ is determined by the number of antichains in $[n]^{q}$.
In particular, $N_3(q,2)-1$ is the number of antichains in $[2]^{q}$, where $[2]^{q}$ is the Boolean poset over $[q]$, that is, the poset of the subsets---ordered by inclusion---of a $q$-element ground set.
These numbers are called {\em Dedekind numbers}, first introduced by Dedekind in~\cite{Dedekind87}, and despite much research they have no known (reasonable) closed formula.
Hence, we do not expect a closed formula even for $N_3(q,2)$ (i.e., the case of two-edge paths), which is the simplest non-trivial case (for any given $q$).

\paragraph{The exact exponent of $N_3(q,n)$:} While Dedekind numbers do not have a closed formula, Kleitman~\cite{Kleitman69} has shown that the number of antichains in $[2]^d$ is at most $2^{(1+o(1))\binom{d}{d/2}}$. Note that $2^{\binom{d}{d/2}}$ is a trivial lower bound since any family of subsets of $[d]$, all of the same cardinality $d/2$, is an antichain.
So Kleitman's result can be phrased as saying that the size of the largest antichain in $[2]^d$ essentially determines also the number of antichains.
It is known that the poset $[n]^d$ (equipped with the partial order $\preceq$) satisfies the so-called ``Sperner property'', which means here that the largest antichain in $[n]^d$ is
also the ``middle layer''. As noted in Subsection~\ref{subsec:PdnBound}, the middle layer is of size $(1+o_d(1))\sqrt{6/d\pi}\cdot n^{d-1}$ for
$d,n \gg 1$, 
so $P_{d-1}(n)$ is (at least) exponentially larger than this.\footnote{So we also note that the constant $2/3$ in Theorem~\ref{theo:lower} can be improved to $\sqrt{6/\pi}$ when $d$ is large.} Now, recall that our best upper bound on $P_{d-1}(n)$ is (roughly) $2^{2n^{d-1}}$, so the two exponents are off by a $\sqrt{d}$ factor.

As the problem of estimating the number of antichains in $[n]^d$ seems like a natural extension of the classical problem of estimating the Dedekind numbers (which is the number of antichains in $[2]^d$),
the following question seems especially natural---does the phenomenon in Dedekind's problem hold in this case as well? In other words, is the number of antichains in $[n]^{d}$ of order $2^{cn^{d-1}/\sqrt{d}}$? One might even be bolder and ask if the exact constant in the exponent is $\sqrt{6/\pi}$ (as the one in the size of the largest antichain). Such a bound would immediately imply that  $N_3(q,n)=2^{(1+o(1))n^{q-1}/\sqrt{\pi q/6}}$ with the $o(1)$ term going to $0$ as $q \rightarrow \infty$.

We note that by now there are several proofs~\cite{Kahn02,KleitmanMa75,Korshunov81,Pippenger99} of Kleitmen's result\footnote{We remark that earlier works, such as the one by Hansel~\cite{Hansel66}, proved the weaker result that the number of antichains in $[2]^{n}$ is at most $C^{\binom{n}{n/2}}$ for some absolute constant $C$. Observe, however, that proving a similar result in our setting, that is, that the number of antichains in $[n]$ is at most $C^{n^{d-1}/\sqrt{d}}$ will suffice to prove that $P_d(n) \leq 2^{O(n^{d-1}/\sqrt{d})}$.} but as of now we are not able to apply any of them
in order to prove that the number of antichains in $[n]^d$ is of order $2^{O(n^{d-1}/\sqrt{d})}$.
It is worth mentioning here that an antichain in the poset $[n]^d$ is nothing but an independent set in the graph whose vertex set is $[n]^d$ and with an edge between any two $x,y\in[n]^d$ satisfying $x\preceq y$ or $y\preceq x$ (this is the corresponding comparability graph).
So it seems that the next step towards a complete solution of the Ramsey-type problem considered in this paper is yet another classical enumerative problem, namely, that of counting independent sets.

\paragraph{Tighter bounds for $N_4(2,n)$:} While we know that $N_3(2,n)=\binom{2n}{n}+1$, it seems hard to determine even the asymptotics of $N_k(2,n)$ for $k > 3$. Let us explain why this is the case by focusing on $k=4$.
Recall that by Corollary~\ref{coro:matousek} we know that $N_4(2,n)=2^{2^{(2-o(1))n}}$. However, as for the \emph{logarithm} of $N_4(2,n)$, what we know by Theorem~\ref{theo:transform} and Lemma~\ref{lemma:NkReclower} is only that (roughly)
$$\dbinom{2n}{n}/n^2 \leq \log_2 N_4(2,n) \leq \dbinom{2n}{n} \;.$$
%
Actually, the enumerative characterization in Theorem~\ref{theo:Nk2exact} tells us more.
Define a poset on the set of line partitions $\P^{3}[n]$ by letting $P \preceq P'$ whenever $P$ is below $P'$ (as in Figure~\ref{fig:1partitionInclusion}). This poset is known in the literature as (the restricted) Young's lattice $L(n,n)$ (see, e.g.,~\cite{Stanley91}, Section 3.1.2), and from Theorem~\ref{theo:Nk2exact} it follows that
$N_4(2,n)-1$ is \emph{exactly} equal to the number of antichains in $L(n,n)$ (as it clearly equals the number of down-sets there).
A well-known result in enumerative combinatorics states that this poset has the Sperner property (this was first shown by Stanley in~\cite{Stanley80}),
implying that the largest antichain in it is obtained by taking every line partition such that the area below it is $\frac12 n^2$.
However, the number $M(n)$ of these line partitions is, as far as we know, not well understood, even asymptotically.
It therefore seems that determining $N_4(2,n)$ is hard; even if $L(n,n)$ is a ``nice'' poset, in the sense that the same phenomenon in Dedekind's problem holds and the number of antichains in $L(n,n)$ is $2^{\Theta(M(n))}$, it seems that in order to determine $N_4(2,n)$ one must first know $M(n)$.

Let us mention here that a lower bound for $M(n)$ is not too hard to obtain. Indeed, letting $X$ be the area below a random member of $L(n,n)$, one can show that the variance of $X$ is $O(n^3)$. By Chebyschev's Inequality, the area below most members of $L(n,n)$ deviates from $\frac12 n^2$ by $O(n^{3/2})$, and so $M(n)\geq\Omega(\size{L(n,n)}/n^{3/2})$ (which improves on the ``naive'' bound $M(n)\geq\size{L(n,n)}/(n^2+1)$).
Note that this implies that $\log_2 N_4(2,n)\geq \Omega\big(\binom{2n}{n}/n^{3/2}\big)$, as $N_4(2,n)\geq 2^{M(n)}$ similarly to the proof of Theorem~\ref{theo:lower}.
It seems reasonable to believe that $M(n)$, and respectively $\log_2 N_4(2,n)$, are not much larger than the aforementioned lower bounds.

\paragraph{Transitive colorings:} As in the rest of the paper, let us assume that the vertices of the complete hypergraphs $K^k_N$ are the integers $1,\ldots,N$. We say that a $q$-coloring of the edges of $K^k_N$ is transitive if the following condition holds; for every $(k+1)$-tuple of vertices $x_1 < x_2, \ldots < x_{k+1}$, if the two edges $\{x_1,\ldots, x_{k}\}$, $\{x_2,\ldots, x_{k+1}\}$ received color $i$, then so did the other $k-1$ edges consisting of $k$ of the vertices $x_1,\ldots, x_{k+1}$. Let $N'_{k}(q,n)$ be the variant of $N_{k}(q,n)$ restricted to transitive colorings. The problem of bounding $N'_{k}(q,n)$ was raised by Eli\'{a}\v{s} and Matou\v{s}ek~\cite{EliasMa11}. We clearly have $N'_{k}(q,n) \leq N_{k}(q,n)$ so the main question is whether $N'_{k}(q,n)$ is a tower of height $k-1$ as is $N_{k}(q,n)$. It is not hard to see that the coloring showing that $N_2(q,n)> n^q$ is transitive, implying that $N'_{2}(q,n)=N_{2}(q,n)$. One can also check that the colori
 ng we use in the proof of Lemma~\ref{lemma:N3lower} is transitive, implying that $N'_{3}(q,n)=N_{3}(q,n)$. One is thus tempted to ask if $N'_{k}(q,n)=N_{k}(q,n)$? As it turns out, the coloring we use to prove Lemma~\ref{lemma:Nklower} is {\em not} transitive. So the question of deciding if $N'_{k}(q,n)=N_{k}(q,n)$ remains an interesting open problem.
It might very well be possible to define a variant of our coloring that will be transitive and give comparable bounds.

\paragraph{A better exponent for $P_d(n)$:} It is not hard to see that one can derive from (\ref{eq:dim2partition}) the bound $N_3(3,n)=(27/16)^{3/2\cdot n^2(1-o(1))} = 2^{(c-o(1))n^2}$, where $c=\frac32(3\log_2 3-4)\approx 1.1323$.
This of course means that the bound in (\ref{eq:dimqpartition}) can be improved to $P_d(n) \leq 2^{(c-o(1))n^{d}}$ (for $d \geq 2$), implying similar improvements of the constants involved in the results stated in Subsections~\ref{subsec:results1} and~\ref{subsection:results2} (for $q \geq 3$).
This can also be used to show that $N_k(3,n)=t_{k-1}((c-o(1))n)$.


\bigskip

\noindent \textbf{Acknowledgment:} We are extremely grateful to Benny Sudakov for suggesting to us some of the problems studied in this paper.

\end{document}